\numberwithin{equation}{section}%公式编号1.1开始
\def\varep{\varepsilon}
\def\N{\mathbb{N}}
\def\R{\mathbb{R}}
\newcommand{\be}{\begin{equation}}
\newcommand{\ee}{\end{equation}}
\newcommand{\baa}{\begin{array}}
\newcommand{\eaa}{\end{array}}
\newtheorem{theorem}{Theorem}[section]
\newtheorem{lemma}[theorem]{Lemma}
\newtheorem{proposition}[theorem]{Proposition}
\newtheorem{definition}[theorem]{Definition}
\title{\bf{KPP transition fronts in a one-dimensional two-patch habitat}}
\author{Fran\c cois Hamel\thanks{Aix-Marseille Univ, CNRS, I2M, Marseille, France (\texttt{francois.hamel@univ-amu.fr}). This work has received funding from Excellence Initiative of Aix-Marseille Universit\'e~-~A*MIDEX, a French ``Investissements d'Avenir'' programme, and from the French ANR RESISTE (ANR-18-CE45-0019) and ReaCh (ANR-23-CE40-0023-02) projects.} \ \& Mingmin Zhang\thanks{CNRS, UMR 5219, Institut de Math\'ematiques de Toulouse, Universit\'{e} de Toulouse, UPS IMT, F-31062 Toulouse Cedex 9, France (\texttt{mingmin.zhang.math@gmail.com}). M.Z. acknowledges support from the ANR via the project Indyana under grant agreement ANR-21-CE40-0008, and the project ReaCh under grant agreement ANR-23-CE40-0023-02.}}
\date{}
\begin{document}
	
\maketitle

\centerline{\it To Professor James D. Murray}
\centerline{\it in admiration and recognition of his great achievements in mathematical biology}
\vskip 0.5cm

\begin{abstract} 	
\noindent{}This paper is concerned with the existence of transition fronts for a one-dimensional two-patch model with KPP reaction terms. Density and flux conditions are imposed at the interface between the two patches. We first construct a pair of suitable super- and subsolutions by making full use of information of the leading edges of two KPP fronts and gluing them through the interface conditions. Then, an entire solution obtained thanks to a limiting argument is shown to be a transition front moving from one patch to the other one. This propagating solution admits asymptotic past and future speeds, and it connects two different fronts, each associated with one of the two patches. The paper thus provides the first example of a transition front for a KPP-type two-patch model with interface conditions.
\vskip 0.1cm
\noindent{\small{\it Mathematics Subject Classification}: 35B08; 35K57.}
\vskip 0.1cm
\noindent{\small{\it Key words}: Reaction-diffusion equations; Transition fronts; KPP reactions; Interface conditions.}
\end{abstract}

%%%%%%%%%%%%%%%%%%%%%%%%%%%%%%%%%%%%%%%%%%%%%%%%%%%%%%%%%%
%%%%%%%%%%%%%%%%%%%%%%%%%%%%%%%%%%%%%%%%%%%%%%%%%%%%%%%%%%

\section{Introduction}

%%%%%%%%%%%%%%%%%%%%%%%%%%%%%%%%%%%%%%%%%%%%%%%%%%%%%%%%%%

\subsection{The model}
In this paper, we deal with the existence of transition fronts of the following two-patch problem with interface conditions:
\begin{align}
\label{model}
\begin{cases}
u_t =d_1  u_{xx}+f_1(u),~&t\in\mathbb{R},\ x<0,\cr
u_t =d_2 u_{xx}+f_2(u),~&t\in\mathbb{R},\ x>0,\cr
u(t,0^-)=u(t,0^+),~&t\in\mathbb{R},\cr
u_x(t,0^-)=\sigma u_x(t,0^+), ~&t\in\mathbb{R},
\end{cases}
\end{align}
in which $\sigma>0$, $d_i>0$ (for $i=1,2$), and the functions $f_i\in C^1(\R)$ (for $i=1,2$) are of Fisher-KPP type:
\be\label{hypkpp}
f_i(0)\!=\!f_i(K_i)\!=\!0,\ 0\!<\!f_i(s)\!\le\!f'_i(0)s\ \hbox{for all}\ s\in(0,K_i),\ f_i'(K_i)<0,\ f_i\!<\!0\ \hbox{in}\ (K_i,+\infty),
\ee
for some $K_i>0$.

From the perspective of ecological dynamics of invasive species, when spreading across a landscape, species encounter different habitat types, and their movement behavior as well as population dynamics may change according to landscape type.  Here, we consider the evolution of a population density under the effect of diffusion and growth, in a medium made up of two different semi-infinite one-dimensional habitats separated by an interface, under a simple assumption that each patch is homogeneous but the two patches may differ so that the diffusion coefficients and the reaction terms (i.e.~net population growth rates) may differ.

Originally, such kind of patchy model with novel interface matching conditions is from a recent work of Maciel and Lutscher \cite{ML2013} which itself is based on the work of Ovaskainen and Cornell \cite{OC2003}. There,  the population density $\widetilde{u}=\widetilde{u}(t,x)$ in such a two-patch landscape satisfies
 \begin{align*}
 	\begin{cases}
 		\widetilde u_t = d_1 \widetilde u_{xx} + \widetilde f_1(\widetilde u),~&t\in\mathbb{R},\ x<0,\cr
 		\widetilde u_t = d_2 \widetilde u_{xx} + \widetilde f_2(\widetilde u),~&t\in\mathbb{R},\ x>0,\cr
 		(1-\alpha)d_1\widetilde u(t,0^-)=\alpha d_2\widetilde u(t,0^+),~&t\in\mathbb{R},\cr
 		d_1\widetilde u_x(t,0^-)=d_2 \widetilde u_x(t,0^+), ~&t\in\mathbb{R},
 	\end{cases}
 \end{align*}
where it is assumed that individuals at the interface may show a preference for one or the other patch type measured by the parameter $\alpha\in(0,1)$ ($\alpha>1/2$ indicates a preference for the left patch $(-\infty,0)$ and $\alpha<1/2$ for the right patch $(0,+\infty)$), and the interface is assumed to be neutral with respect to reaction dynamics (i.e.~no individuals are born or die from crossing the interface) so that the flux is continuous at the interface (such continuity property of the flux implies mass conservation in the absence of reaction terms). It is observed that the population density may be discontinuous at the interface as long as $k:=\frac{\alpha}{1-\alpha}\,\frac{d_2}{d_1}$ is not equal to~1. We refer to \cite{ML2013} for a detailed derivation of this condition from a random walk and a thorough discussion of the biological implications. This type of model has been used to study questions of persistence and spread~\cite{AL2019,ML2015} and evolutionary stable movement strategies~\cite{MCCL2020}, whereas related flux matching conditions between adjacent higher-dimensional domains have been considered in~\cite{BRT2021}. As far as the reaction-diffusion equations in each of the two patches $\{x<0\}$ and $\{x>0\}$ are concerned, they are standard equations used to describe biological invasions in mathematical biology and ecology, see e.g. Murray's book~\cite{Murray}, as well as~\cite{CC2003,R2013,SK1997,Z2003}.
 
From mathematical viewpoint, the discontinuity of the density at $x=0$ makes the problem quite delicate to study, and  it turns out to be much easier to rescale the model (by setting $u(t,x)=\widetilde u(t,x)$ in patch~1, $u(t,x)=k\widetilde u(t,x)$ in patch~2 with $k=\frac{\alpha}{1-\alpha}\,\frac{d_2}{d_1}$, $f_1=\widetilde{f}_1$ and $f_2(s)=k\widetilde f_2(s/k)$) in such a way that the matching conditions become continuous in the density, and this is exactly where the equivalent problem \eqref{model} - our objective - comes from, with $\sigma=(1-\alpha)/\alpha>0$. Our present  work, concerning the existence of transition fronts for problem~\eqref{model}, is a continuation of the rigorous analysis towards a better understanding of propagation phenomena in such models \cite{HLZ, HLZ2, SKW2015}.

%%%%%%%%%%%%%%%%%%%%%%%%%%%%%%%%%%%%%%%%%%%%%%%%%%%%%%%%%%

\subsection{Traveling fronts and transition fronts for homogeneous or more general equations}

The issue of traveling fronts for the classical Fisher-KPP equation 
\begin{equation}
\label{KPP-1}
u_t=u_{xx}+f(u),~~t>0,~x\in\mathbb{R},
\end{equation}
has been addressed in the pioneering works of Fisher \cite{F1937} and Kolmogorov, Petrovskii and Piskunov \cite{KPP1937}, where $f$ is of Fisher-KPP type: 
$$f(0)=f(1)=0\ \hbox{ and }\ 0<f(s)\le f'(0)s\ \hbox{ for all }s\in (0,1).$$
It was proved that \eqref{KPP-1} admits traveling front solutions $u(t,x)=\varphi(x-ct)$ with $\varphi:\R\to(0,1)$ and $\varphi(-\infty)=1$, $\varphi(+\infty)=0$, if and only if $c\ge c^*:=2\sqrt{f'(0)}$, where  $c\in\R$ is the front speed and $\varphi=\varphi_c$ is the front profile (depending on $c$). It is also known \cite{KPP1937} that the front with minimal speed $c^*$ attracts the solutions of the Cauchy problem starting from the Heaviside function $\mathbbm{1}_{(-\infty,0)}$ in a certain sense, see e.g.~\cite{B1983,HNRR2013,L1985,U1978}. Among many other references, the existence of traveling fronts to more general types of reaction terms was discussed in~\cite{AW1,AW2,F1979,FM1977,GK2004,VVV1994}.

Afterwards, heterogeneity has been taken into account in the investigation of propagating solutions of non-homogeneous reaction-diffusion equations, for which standard traveling fronts do not exist in general. Especially, when the equation is spatially or temporally periodic, the notion of pulsating traveling fronts has been developed in one-dimensional or higher-dimensional domains, see e.g.~\cite{BH2002,SKT1986,X2000}. Still analogously to the homogeneous case, pulsating traveling fronts with speed $c$, for KPP-type periodic equations exist, if and only if $c\ge c^*$, where the minimal wave speed $c^*$ has a variational expression in terms of periodic principal eigenvalues of some linear operators \cite{BHN2005,BHR2005,DD,LYZ2006,LZ2007,LZ2010,N2009, NRX2005,W2002}.

Later on, the study of fronts for reaction-diffusion equations in more general heterogeneous media has been given considerable attention. A generalization of the notion of traveling fronts in such media has been given in~\cite{BH2007,BH2012} in general domains in any space dimension, see also~\cite{M2003,S2004} for related definitions in particular cases. When applied to a one-dimensional equation such as~\eqref{KPP-1}, the definition is as follows: a generalized transition front of~\eqref{KPP-1} connecting $1$ and $0$ is a time-global solution $u:\R\times\R\to(0,1)$ for which there exists a locally bounded function $X: \R\to\R$ such that 
$$\lim_{x\to -\infty} u(t,x+X(t))=1,~~~\lim_{x\to +\infty} u(t,x+X(t))=0,~~\text{uniformly in}~t\in\R,$$
where $X(t)\in\R$ reflects the position of the transition front as time progresses. Moreover, such a transition front $u$ has a global mean speed $w\in\R$ if 
$$\lim_{|t-s|\to+\infty}\frac{X(t)-X(s)}{t-s}=w.$$
This definition covers all the classical examples of travelling and pulsating fronts. There has been a large literature devoted to transition fronts for reaction-diffusion equations of the type~\eqref{KPP-1} with homogeneous or heterogeneous KPP-type reactions $f$ in one dimension, see e.g.~\cite{BH2012,NR2009,Shen2017,TZZ2014,Z2012}, and in higher dimensions, see e.g.~\cite{AHLTZ2019,BH2012,Z2012,Z2017}. Whereas transition fronts exist in general for ignition-type equations \cite{MRS2010, NR2009,Z2013,Z2017b}, transition fronts for spatially heterogeneous KPP equations do not exist in general~\cite{HN2022,NRRZ}. The existence of transition fronts for KPP time-dependent equations has been proved when the coefficients are assumed to be uniquely ergodic in~\cite{Shen2011-} and in a general framework~\cite{NR2012}. Existence results have been further extended to KPP equations with time-heterogeneous reaction terms and space-periodic diffusion and advection terms in~\cite{RR2014}, and to general time-heterogeneous and space-periodic equations in~\cite{NR2015}. The existence of generalized transition fronts for KPP equations in one-dimensional almost periodic media was investigated in~\cite{NR2017}, and for monostable equations in time recurrent and spatially periodic media in~\cite{Shen2011+}. On the other hand, the existence and asymptotic dynamics of transition fronts in time-dependent KPP type equations was analyzed in~\cite{HR-SIMA}, where the media are specifically asymptotically homogeneous as $t\to\pm \infty$ with two possibly different limits. Transition fronts for homogeneous KPP equation~\eqref{KPP-1} as well as the set of their admissible asymptotic past and future speeds and their asymptotic profiles were studied in~\cite{HN2001,HR-TAMS}, it was proved in particular in~\cite{HR-TAMS} that the transition fronts of \eqref{KPP-1} can only accelerate. The existence of critical transition waves, which are by definition steeper than any other solution, was addressed in~\cite{N2015} for general spatially heterogeneous one-dimensional equations.
 
In contrast, the one-dimensional two-patch model~\eqref{model} we are considering here is spatially heterogeneous in a simple fashion but very different from existing ones, in the sense that each patch is homogeneous and the two patches match each other through particular interface conditions at $x=0$. It is well known that traveling fronts for homogeneous KPP equations of the type~\eqref{KPP-1} are pulled by their tails~\cite{GGHR,S1976} and the spreading speed of solutions of the associated Cauchy problem converging to $0$ as $x\to+\infty$ is determined by the exponential decay of the initial condition~\cite{B1983,L1985,U1978}. Therefore, in order to show the existence of propagating solutions of~\eqref{model} that decay to $0$ as $x\to+\infty$, a natural attempt is to make full use of information of the leading edges of the KPP fronts in both patches, and to match them through the interface conditions. This is exactly the idea we will carry out in the paper. More precisely, we find out a suitable pair of super- and subsolutions, which leads by a constructive limiting argument to the existence of rightwards propagating transition fronts of~\eqref{model} with explicit asymptotic past and future speeds (see Definition~\ref{def} below for these notions of speeds). In a sense, whereas standard traveling fronts can not exist in general due to the interface conditions and the different diffusion and reaction terms in the two patches, model~\eqref{model} with its interface conditions is robust enough to allow the existence of non-trivial propagating solutions in the form of transition fronts connecting two different steady states. Up to the best of our knowledge, at the exception of a recent work~\cite{JM2019} on propagation or blocking phenomena for a related system made up of copies of a bistable equation in multiple disjoint half-lines with a junction, the topic of transition fronts for a patch model like~\eqref{model} is quite new and there had been no existing results on it up to now. However, it is unclear at this stage whether other kinds of transition fronts exist or not, and the question of the classification of such transition fronts, which is actually still open even in the homogeneous case~\eqref{KPP-1}, goes much beyond the scope of this article, and we leave it open for a future work.

%%%%%%%%%%%%%%%%%%%%%%%%%%%%%%%%%%%%%%%%%%%%%%%%%%%%%%%%%%
%%%%%%%%%%%%%%%%%%%%%%%%%%%%%%%%%%%%%%%%%%%%%%%%%%%%%%%%%%

\section{The main result}

Before stating our main result, we make precise the notion of classical solution of~\eqref{model} and we recall some fundamental results of~\cite{HLZ2,HLZ} on the Cauchy problem and the comparison principle associated with~\eqref{model}.

%%%%%%%%%%%%%%%%%%%%%%%%%%%%%%%%%%%%%%%%%%%%%%%%%%%%%%%%%%

\subsection{What is known about~\eqref{model}}

Throughout this paper, we set 
$$I_1:=(-\infty,0) ~~\text{and}~~I_2:=(0,+\infty).$$ 
In the sequel, by $C^{1;2}_{t;x}$, we understand the class of functions which are of class $C^1$ in $t$ and $C^2$ in~$x$. Similarly, for $\gamma>0$, $C^{1,\gamma;2,\gamma}_{t;x}$ is the class of functions which are $C^{1,\gamma}$ in $t$ and $C^{2,\gamma}$ in $x$. By a solution to the Cauchy problem~\eqref{model} associated with a continuous bounded initial datum~$u_0$, we mean a classical solution in the following sense.

\begin{definition}[\!\!\cite{HLZ2}]
\label{def1}
For $T\in(0,+\infty]$, we say that a continuous function $u:[0,T)\times\R\to\R$ is a classical solution of the Cauchy problem~\eqref{model} in $[0,T)\times\R$ with initial datum~$u_0$, if $u(0,\cdot)=u_0$ in~$\R$, if~$u|_{(0,T)\times\overline{I_i}}\in C^{1;2}_{t;x}\big((0,T)\times \overline{I_i}\big)$ $($for $i=1,2$$)$, and if all identities in~\eqref{model} are satisfied pointwise for $0<t<T$.
\end{definition}

We recall the well-posedness of the Cauchy problem \eqref{model} as well as regularity estimates of the solution.

\begin{proposition}[\!\!\cite{HLZ,HLZ2}]
\label{prop-wellposedness+Schauder}
For any nonnegative bounded continuous function $u_0:\R\to\R$ and for any $\gamma\in(0,1/2)$, there is a unique nonnegative bounded classical solution~$u$ of~\eqref{model} in~$[0,+\infty)\times\R$ with initial datum $u_0$ such that, for any $\tau>0$ and $A>0$, 
\begin{align*}
\Vert u|_{[\tau,+\infty)\times[-A,0]}\Vert_{C^{1,\gamma;2,\gamma}_{t;x}([\tau,+\infty)\times[-A,0])}+\Vert u|_{[\tau,+\infty)\times[0,A]}\Vert_{C^{1,\gamma;2,\gamma}_{t;x}([\tau,+\infty)\times[0,A])}\le C,
\end{align*}
with a positive constant $C$ depending on $\tau$, $A$, $\gamma$, $d_{1,2}$, $f_{1,2}$, $\sigma$, and $\Vert u_0 \Vert_{L^\infty(\mathbb{R})}$. Moreover, 
$$0\le u(t,x)\le\max(K_1,K_2,\|u_0\|_{L^\infty(\R)})\ \hbox{ for all $(t,x)\in[0,+\infty)\times\R$},$$
and $u(t,x)>0$ for all $(t,x)\in(0,+\infty)\times\R$ if $u_0\not\equiv0$ in $\R$. Lastly, the solutions depend monotonically and continuously on the initial data, in the sense that if $u_0\le v_0$ then the corres\-ponding solutions satisfy $u\le v$ in~$[0,+\infty)\times\R$, and for any $T\in(0,+\infty)$ the map~$u_0\mapsto u$ is continuous from $C^+(\R)\cap L^\infty(\R)$ to~$C([0,T]\times\R)\cap L^\infty([0,T]\times\R)$ equipped with the sup norms, where~$C^+(\R)$ denotes the set of nonnegative continuous functions in $\R$.
\end{proposition}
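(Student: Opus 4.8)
The plan is to treat \eqref{model} as a parabolic transmission (diffraction) problem and to proceed in the classical order: linear theory with an explicit interface heat kernel, then a fixed-point argument for local existence, then global existence together with the qualitative properties (a priori bounds, positivity, monotonicity) via a comparison principle adapted to the interface, and finally one-sided Schauder estimates up to $x=0$. A convenient preliminary reduction is to rescale the space variable separately in each patch, setting $y=x/\sqrt{d_1}$ for $x<0$ and $y=x/\sqrt{d_2}$ for $x>0$: this normalizes both diffusivities to $1$ while turning the flux condition into $u_y(t,0^-)=\tilde\sigma\,u_y(t,0^+)$ with $\tilde\sigma=\sigma\sqrt{d_1/d_2}>0$, so the sign structure is preserved and one may assume $d_1=d_2=1$.

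First I would build the fundamental solution $\Gamma(t,x,y)$ of $\partial_t-\partial_{xx}$ equipped with the interface conditions $\Gamma(t,0^-,y)=\Gamma(t,0^+,y)$ and $\Gamma_x(t,0^-,y)=\tilde\sigma\,\Gamma_x(t,0^+,y)$. By the method of images this kernel is explicit: for a source in one patch it is the sum of the free Gaussian and a reflected Gaussian weighted by a reflection coefficient, plus a transmitted Gaussian in the other patch, the coefficients being the unique ones enforcing the two interface relations. The facts I need are that $\Gamma$ is smooth away from $x=0$, obeys Gaussian bounds, and is strictly positive. With $\Gamma$ in hand, a bounded solution is a fixed point of the Duhamel map $u\mapsto\int_\R\Gamma(t,\cdot,y)u_0(y)\,dy+\int_0^t\!\int_\R\Gamma(t-s,\cdot,y)f(y,u(s,y))\,dy\,ds$, where $f(y,\cdot)=f_1$ for $y<0$ and $f(y,\cdot)=f_2$ for $y>0$. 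Since $f_1,f_2\in C^1(\R)$ are locally Lipschitz, this map is a contraction on $C([0,\tau];C_b(\R))$ for small $\tau$, giving a unique local mild solution; parabolic regularity away from the interface, combined with the continuity of $u$ and of the flux across $x=0$ that is built into $\Gamma$, upgrades it to a classical solution in the sense of Definition~\ref{def1}. Continuous dependence on the initial datum over a finite time interval then follows from a Gronwall estimate on the difference of two Duhamel fixed points.

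The step I expect to be the main obstacle is the comparison principle at the interface, which is also what promotes the local solution to a global one. Setting $w=u-v$ for a subsolution $u$ and a supersolution $v$ of \eqref{model}, and linearizing through $f_i(u)-f_i(v)=c_i(t,x)\,w$ with $c_i\in L^\infty$, one must show $w\le0$ despite the coupling of the two half-lines by the flux relation. My preferred route is a weighted energy estimate: differentiating $E(t)=\int_{-\infty}^0(w^+)^2\,dx+\alpha\int_0^\infty(w^+)^2\,dx$ and integrating by parts on each patch produces an interface term proportional to $w^+(t,0)\,w_y(t,0^+)\,(\tilde\sigma-\alpha)$ after using continuity and the flux condition, which vanishes for the tuned weight $\alpha=\tilde\sigma>0$; the remaining terms give $E'(t)\le 2\|c\|_{L^\infty}E(t)$, so $E(0)=0$ forces $w^+\equiv0$ (a spatial cut-off is inserted to justify the integrations for merely bounded data). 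The same interface bookkeeping, now combined with the strong maximum principle on each patch and the strict positivity of $\Gamma$, gives $u\ge0$, the monotone dependence $u_0\le v_0\Rightarrow u\le v$, and the strict positivity $u>0$ for $t>0$ when $u_0\not\equiv0$. Granting comparison, the constant $0$ and the constant $M=\max(K_1,K_2,\|u_0\|_{L^\infty(\R)})$ are respectively a sub- and a supersolution of \eqref{model}: the sign conditions in \eqref{hypkpp} give $f_i(M)\le0$ and $f_i(0)=0$, while a constant trivially meets both interface conditions. Hence $0\le u\le M$, and local solutions extend to all of $[0,+\infty)$.

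It remains to obtain the estimate in the $C^{1,\gamma;2,\gamma}_{t;x}$ norms on $[\tau,+\infty)\times[-A,0]$ and on $[\tau,+\infty)\times[0,A]$. I would derive these from the Schauder theory for parabolic transmission problems: localizing near the interface and freezing coefficients, the bounded right-hand sides $f_i(u)$ together with the two interface conditions yield one-sided H\"older control of $u_t$ and $u_{xx}$ up to $x=0$ from each side, the endpoint $\gamma<1/2$ being dictated by the parabolic scaling at the interface. I stress that the regularity is genuinely one-sided: differentiating the two equations along the interface, where $u_t(t,0^-)=u_t(t,0^+)$, gives $d_1u_{xx}(t,0^-)-d_2u_{xx}(t,0^+)=f_2(u(t,0))-f_1(u(t,0))$, so $u_{xx}$ generically jumps across $x=0$, which is exactly why the statement records separate norms on $[-A,0]$ and $[0,A]$ rather than a single estimate on $[-A,A]$. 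The twin difficulties, namely adapting both the maximum principle and the Schauder theory so that they respect the flux condition $u_x(0^-)=\sigma u_x(0^+)$ rather than the usual smooth-fit condition, are the crux of the proposition; once they are handled, the remaining assertions follow by routine parabolic arguments.
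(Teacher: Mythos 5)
Your plan is correct in outline, but it takes a genuinely different route from the one the paper indicates: Proposition~\ref{prop-wellposedness+Schauder} is quoted from \cite{HLZ,HLZ2}, and the proof the paper points to is the scheme of \cite[Theorem~2.2]{HLZ} --- solve approximated problems on bounded intervals $[-n,n]$ with the interface kept in the interior, derive $L^\infty$ and Schauder-type a priori estimates uniform in $n$, and pass to the limit $n\to+\infty$ --- with the comparison principle coming from the pointwise maximum-principle argument of \cite[Proposition~A.3]{HLZ}, where the flux condition is exploited at a first touching point at $x=0$ via a Hopf-type reasoning, not by an energy identity. You instead construct the explicit transmission kernel after the per-patch rescaling (which is indeed essential: with $d_1\neq d_2$ the method of images cannot match Gaussians of different variances, so normalizing to unit diffusivities with $\tilde\sigma=\sigma\sqrt{d_1/d_2}$ first is the right move; the image coefficients $\beta=(\tilde\sigma-1)/(\tilde\sigma+1)$ and $\theta=2\tilde\sigma/(\tilde\sigma+1)$ satisfy $|\beta|<1$, $\theta>0$, giving the positivity and Gaussian bounds you invoke), then run a Duhamel contraction and prove comparison by a weighted energy estimate with weight tuned to the flux ratio (in the original variables, $\alpha=\sigma d_1/d_2$). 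Your route buys a self-contained, constructive treatment in which positivity, continuous dependence (Gronwall), and the $\gamma<1/2$ scale follow directly from kernel and potential estimates; the approximation route of \cite{HLZ} is less explicit but more robust, since it does not rely on piecewise-constant coefficients and extends to the periodic patchy landscapes treated there. Two small repairs: for a sub/supersolution pair the interface term in your energy computation does not vanish at $\alpha=\tilde\sigma$ but has the favorable sign $2w^+(t,0)\big(w_y(t,0^-)-\tilde\sigma\,w_y(t,0^+)\big)\le 0$, because the flux relations in Definition~\ref{def2} are inequalities; and uniqueness among bounded \emph{classical} solutions should be routed through your comparison principle rather than through uniqueness of mild solutions, since identifying a classical solution with the Duhamel fixed point would require an extra duality argument for the (non-self-adjoint) interface kernel.
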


The existence in Proposition~\ref{prop-wellposedness+Schauder} can be proved by following the proof of~\cite[Theorem~2.2]{HLZ}, namely by solving approximated problems in bounded intervals $[-n,n]$, using a priori estimates, and passing to the limit as $n\to+\infty$.

We also recall the definition of super- and subsolutions for~\eqref{model} and the comparison principle in the following two statements.

\begin{definition}[\!\!\cite{HLZ2}]
\label{def2}
For $T\in(0,+\infty)$, a bounded continuous function $\overline{u}:[0,T]\times\R\to\mathbb{R}$ is called a supersolution of~\eqref{model} in~$[0,T]\times\R$, if~$\overline{u}|_{(0,T]\times\overline{I_i}}\in  C^{1;2}_{t;x}((0,T]\times\overline{I_i})$ $($for~$i=1,2$$)$, if~$\overline{u}_t(t,x)\ge d_i\overline{u}_{xx}(t,x)+f_i(\overline{u}(t,x))$ for $i=1,2$ and for all $0<t\le T$ and $x\in I_i$, and~if
$$\overline{u}_x(t,0^-)\ge \sigma \overline{u}_x(t,0^+)~~\text{for all}~t\in(0,T].$$
Subsolutions are defined in a similar way with all the inequality signs above reversed.
\end{definition}

\begin{proposition}[\!\!\cite{HLZ}]
\label{prop-cp}
For $T\in(0,+\infty)$, let $\overline{u}$ and $\underline{u}$ be, respectively, a super- and a subsolution of~\eqref{model} in~$[0,T]\times\R$, and assume that $\overline{u}(0,\cdot)\ge\underline{u}(0,\cdot)$ in~$\R$. Then,~$\overline u\ge \underline u$ in~$[0,T]\times\R$ and, if $\overline{u}(0,\cdot)\not\equiv\underline{u}(0,\cdot)$ in $\R$, then $\overline{u}>\underline{u}$ in $(0,T]\times\R$.
\end{proposition}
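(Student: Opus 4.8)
The plan is to prove a linear parabolic comparison principle for the difference $w:=\overline u-\underline u$, the only genuinely new feature being the flux inequality at the interface $x=0$. First I would linearize: since $\overline u$ and $\underline u$ are bounded and each $f_i$ is $C^1$, the mean value theorem gives $f_i(\overline u)-f_i(\underline u)=c_i(t,x)\,w$ in each patch $I_i$, with $\|c_i\|_\infty\le L:=\max_i\sup_{[-M_0,M_0]}|f_i'|$, where $M_0$ bounds $|\overline u|$ and $|\underline u|$. Subtracting the two differential inequalities of Definition~\ref{def2} then yields $w_t-d_iw_{xx}-c_iw\ge 0$ in $(0,T]\times I_i$, while subtracting the two interface inequalities gives $w_x(t,0^-)\ge\sigma\,w_x(t,0^+)$; moreover $w$ is continuous (so $w(t,0^-)=w(t,0^+)$) and $w(0,\cdot)\ge 0$. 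To remove the sign ambiguity of the zeroth order term I would set $v:=e^{-\lambda t}w$ with $\lambda>L$, so that $v$ satisfies $v_t-d_iv_{xx}+(\lambda-c_i)v\ge 0$ with $\lambda-c_i\ge\lambda-L>0$, and the interface inequality $v_x(t,0^-)\ge\sigma\,v_x(t,0^+)$ is preserved since $e^{-\lambda t}>0$.

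Next I would run a perturbed minimum argument. Put $v_\varepsilon:=v+\varepsilon(1+x^2)$ for $\varepsilon>0$. The weight $\phi(x)=1+x^2$ is a strict supersolution of the shifted operator once $\lambda-L>2\max(d_1,d_2)$ (indeed $-d_i\phi''+(\lambda-c_i)\phi\ge -2d_i+(\lambda-L)>0$), it satisfies the interface condition with equality ($\phi'(0^\pm)=0$), and it forces $v_\varepsilon\to+\infty$ as $|x|\to\infty$ uniformly in $t\in[0,T]$ because $v$ is bounded. Suppose, for contradiction, that $\inf v_\varepsilon<0$; then this infimum is attained at some $(t_0,x_0)\in(0,T]\times\R$ (it cannot be attained at $t=0$, where $v_\varepsilon\ge\varepsilon>0$). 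If $x_0\ne 0$, the interior minimality relations $v_{\varepsilon,t}(t_0,x_0)\le 0$ and $v_{\varepsilon,xx}(t_0,x_0)\ge 0$, together with $(\lambda-c_i)v_\varepsilon(t_0,x_0)<0$, contradict the strict inequality $v_{\varepsilon,t}-d_iv_{\varepsilon,xx}+(\lambda-c_i)v_\varepsilon>0$.

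The crux is the interface case $x_0=0$, which is exactly where the condition $\sigma>0$ enters. Since $v_\varepsilon(t_0,\cdot)$ attains its global minimum at $0$, one-sided minimality in each patch gives $v_{\varepsilon,x}(t_0,0^-)\le 0$ and $v_{\varepsilon,x}(t_0,0^+)\ge 0$. Plugging these into the preserved flux inequality $v_{\varepsilon,x}(t_0,0^-)\ge\sigma\,v_{\varepsilon,x}(t_0,0^+)$ forces, because $\sigma>0$, both one-sided derivatives to vanish. Then, viewing $v_\varepsilon(t_0,\cdot)$ on $\overline{I_1}=(-\infty,0]$ as having a minimum at the endpoint with vanishing first derivative, a Taylor expansion yields $v_{\varepsilon,xx}(t_0,0^-)\ge 0$, and the left-patch inequality is contradicted just as in the interior case. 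Hence $v_\varepsilon\ge 0$ on $[0,T]\times\R$ for every $\varepsilon>0$, and letting $\varepsilon\to 0$ gives $v\ge 0$, i.e. $\overline u\ge\underline u$.

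Finally, for the strict statement I would combine the strong maximum principle inside each patch with the Hopf lemma at the interface. Assume $w(t_1,x_1)=0$ for some $t_1\in(0,T]$. If $x_1\ne 0$, the parabolic strong maximum principle applied to the nonnegative supersolution $w$ in that patch forces $w\equiv 0$ on $(0,t_1]\times I_i$; continuity then gives $w(t,0)=0$ and a one-sided derivative equal to $0$ there, and the flux inequality together with the Hopf lemma in the adjacent patch propagates $w\equiv 0$ across $x=0$, so that $w\equiv 0$ on $(0,t_1]\times\R$ and, by continuity down to $t=0$, $w(0,\cdot)\equiv 0$, contradicting $\overline u(0,\cdot)\not\equiv\underline u(0,\cdot)$. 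If instead $x_1=0$, then either $w$ already vanishes at an interior point (previous case) or $w>0$ throughout both open patches for $t\in(0,t_1]$; in the latter situation Hopf's lemma yields $w_x(t_1,0^-)<0<w_x(t_1,0^+)$, which contradicts $w_x(t_1,0^-)\ge\sigma w_x(t_1,0^+)$ since $\sigma>0$. I expect the handling of the interface---transferring information across $x=0$ through the flux inequality and the sign of $\sigma$, both in the weak comparison and in the Hopf/strong-maximum step---to be the main obstacle; the rest is the classical parabolic machinery.
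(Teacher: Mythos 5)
Your proof is correct, and it follows the standard route behind this result: linearize the difference $w=\overline u-\underline u$ via the mean value theorem, apply an exponential-in-time shift and the penalization $\varepsilon(1+x^2)$ (whose derivative vanishes at $x=0$, so the flux inequality is preserved), exclude a negative minimum both at interior points and at the interface --- where the one-sided minimality relations combined with $w_x(t,0^-)\ge\sigma w_x(t,0^+)$ and $\sigma>0$ force both one-sided derivatives to vanish and the one-sided Taylor expansion gives $w_{xx}(t_0,0^-)\ge0$ --- and then derive strictness from the strong parabolic maximum principle in each patch together with the Hopf lemma at $x=0$. Note that the paper itself does not reprove this proposition but imports it from \cite[Proposition~A.3]{HLZ}, whose proof proceeds by the same classical comparison-principle machinery adapted to the interface conditions, so your blind attempt matches the intended argument rather than offering a genuinely different one.
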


Proposition~\ref{prop-cp} is derived from~\cite[Proposition~A.3]{HLZ}. From the proof of~\cite[Proposition~A.3]{HLZ}, the above result partly extends to the case where $\overline{u}$ and $\underline{u}$ are generalized super- and sub-solutions, that is,
$$\overline{u}(t,x)=\min(\overline{u}_{1,i}(t,x),\ldots,\overline{u}_{p_i,i}(t,x))\ \hbox{ and }\ \underline{u}(t,x)=\max(\underline{u}_{1,i}(t,x),\ldots,\underline{u}_{q_i,i}(t,x))$$
for $i=1,2$, $t\in[0,T]$ and $x\in I_i$, with any positive integers $p_i$ and $q_i$. Here, the functions $\overline{u}_{j,i}$ and $\underline{u}_{k,i}$, for $i=1,2$, $j\in\llbracket1,p_i\rrbracket$ and $k\in\llbracket1,q_i\rrbracket$, are all assumed to be defined, bounded and continuous in $[0,T]\times\overline{I_i}$, and of class $C^{1;2}_{t;x}((0,T]\times\overline{I_i})$. One also assumes that:
\begin{itemize}
\item $(\overline{u}_{j,i})_t(t,x)\ge d_i(\overline{u}_{j,i})_{xx}(t,x)+f_i(\overline{u}_{j,i}(t,x))$ for $i=1,2$, $j\in\llbracket1,p_i\rrbracket$ and $(t,x)\in(0,T]\times I_i$ such that $\overline{u}(t,x)=\overline{u}_{j,i}(t,x)$;
\item $(\underline{u}_{k,i})_t(t,x)\le d_i(\underline{u}_{k,i})_{xx}(t,x)+f_i(\underline{u}_{k,i}(t,x))$ for $i=1,2$, $k\in\llbracket1,q_i\rrbracket$ and $(t,x)\in(0,T]\times I_i$ such that $\underline{u}(t,x)=\underline{u}_{k,i}(t,x)$;
\item there are $j_i\in\llbracket1,p_i\rrbracket$ (for $i=1,2$) and $r>0$ such that $\overline{u}_{j_i,i}(t,x)=\overline{u}(t,x)$ for all $t\in[0,T]$ and $x\in I_i\cap(-r,r)$, $\overline{u}_{j_1,1}(t,0^-)=\overline{u}_{j_2,2}(t,0^+)$ for all $t\in[0,T]$, and $(\overline{u}_{j_1,1})_x(t,0^-)\ge\sigma(\overline{u}_{j_2,2})_x(t,0^+)$ for all $t\in(0,T]$; 
\item there are $k_i\in\llbracket1,q_i\rrbracket$ (for $i=1,2$) and $s>0$ such that $\underline{u}_{k_i,i}(t,x)=\underline{u}(t,x)$ for all $t\in[0,T]$ and $x\in I_i\cap(-s,s)$, $\underline{u}_{k_1,1}(t,0^-)=\underline{u}_{k_2,2}(t,0^+)$ for all $t\in[0,T]$, and $(\underline{u}_{k_1,1})_x(t,0^-)\le\sigma(\underline{u}_{k_2,2})_x(t,0^+)$ for all $t\in(0,T]$.
\end{itemize}
From the above assumptions, the functions $\overline{u}$ and $\underline{u}$ can be extended continuously in $[0,T]\times\R$ (that is, including at the interface $x=0$). The extension of Proposition~\ref{prop-cp} asserts that, if~$\overline{u}(0,\cdot)\ge\underline{u}(0,\cdot)$ in~$\R$, then $\overline u\ge \underline u$ in~$[0,T]\times\R$.

Lastly, by a classical stationary solution of~\eqref{model}, we mean a continuous function $U: \mathbb{R}\to\mathbb{R}$ such that $U|_{\overline{I_i}}\in C^2(\overline{I_i})$ (for $i=1,2$) and all identities in~\eqref{model} are satisfied pointwise, but without any dependence on $t$. It is known from \cite{HLZ2} that~\eqref{model} admits a unique positive bounded stationary solution $V$. Moreover,
$$V(-\infty)=K_1\ \hbox{ and }\ V(+\infty)=K_2,$$
and $V$ is strictly monotone if $K_1\neq K_2$, whereas $V$ is constant if $K_1=K_2$.

%%%%%%%%%%%%%%%%%%%%%%%%%%%%%%%%%%%%%%%%%%%%%%%%%%%%%%%%%%

\subsection{Some notations and the notion of transition front connecting $V$ and $0$ for~\eqref{model}}

We recall that, for $i=1,2$, the homogeneous Fisher-KPP equation
$$u_t=d_i u_{xx}+f_i(u),~~t\in\mathbb{R},~x\in\mathbb{R},$$
admits standard traveling fronts $\phi_i(x-c_it)$ such that
\be\label{TW}
d_i\phi_i''+c_i\phi_i'+f_i(\phi_i)=0~\text{in}~\mathbb{R},~~\phi_i'<0~\text{in}~\mathbb{R},~~\phi_i(-\infty)=K_i,~~\phi_i(+\infty)=0,
\ee
if and only if $c_i\ge c^*_i:=2\sqrt{d_i\mu_i}$, where we denote
\be\label{defmui}
\mu_i:=f_i'(0)>0
\ee
for convenience (the functions $\phi_i$ also depend on the speeds $c_i$ and we should therefore write~$\phi_{i,c_i}$, but we kept the notation $\phi_i$ for the sake of simplicity, as the considered speeds $c_1$ and $c_2$ in the main result will be explicit). Furthermore, the functions $\phi_i$ are  unique up to shifts. By~\cite{AW2}, throughout this paper, we assume without loss of generality, up to shifts, that the traveling wave profiles $\phi_i$ (for $i=1,2$) satisfy the following normalization conditions:
\begin{equation}
\label{TW-decay}
\begin{aligned}
\begin{cases}
\phi_i(\xi)\sim e^{-\lambda_i\xi}~&\text{for}~c_i>c^*_i\cr
\phi_i^*(\xi)\sim \xi e^{-\lambda_i^*\xi}~&\text{for}~c_i=c^*_i
\end{cases}
~~\text{as}~\xi\to+\infty,
\end{aligned}
\end{equation}
where 
$$\lambda_i:=\frac{c_i-\sqrt{c_i^2-4d_i\mu_i}}{2d_i}~~\text{for}~c_i> c^*_i\ \hbox{ and }\ \lambda_i^*:=\frac{c_i^*}{2d_i}=\sqrt{\frac{\mu_i}{d_i}}~~\text{for}~c_i= c^*_i.$$
With the normalization \eqref{TW-decay}, it is also known that
\be\label{ineqphii}
0<\phi_i(\xi)\le e^{-\lambda_i\xi}\ \hbox{ for all $c_i>c^*_i$ and $\xi\in\mathbb{R}$}.
\ee

Throughout this paper, we will further assume that the functions $f_i$ (for $i=1,2$) satisfy the following regularity property:
\begin{equation}
\label{f-w}
f_i(s)\ge \mu_is-C s^{1+\omega}~~\text{for all}~s\in[0,K_i],
\end{equation}
for some $C>0$ and $\omega>0$. 

\begin{definition}
\label{def}
For problem \eqref{model}, a transition front connecting the unique positive bounded stationary solution~$V$ and~$0$ is a time-global classical solution $u$ for which there exists a locally bounded function $X: \mathbb{R}\to\mathbb{R}$ such that
\begin{align}
\label{X(t)}
\begin{cases}
u(t,x)- V(x)\to 0~~&\text{as}~x-X(t)\to-\infty\cr
u(t,x)\to 0~~&\text{as}~x-X(t)\to+\infty
\end{cases}
~~~\text{uniformly in}~t\in\mathbb{R}.
\end{align}
Moreover, we say that a transition front connecting  $V$ and $0$ for problem~\eqref{model} has an asymptotic past speed $c_-\in\mathbb{R}$ $($resp. an asymptotic future speed $c_+\in\mathbb{R}$$)$, if
$$\frac{X(t)}{t}\to c_-~\text{as}~t\to -\infty~~\Big(\text{resp.}~\frac{X(t)}{t}\to c_+~\text{as}~t\to +\infty\Big).$$
\end{definition}

Observe that any transition front $u$ of~\eqref{model} connecting $V$ and $0$ necessarily satisfies
\begin{align}\label{uK1}
\begin{cases}
u(t,x)\to K_1~~&\text{as}~x\to-\infty\cr
u(t,x)\to 0~~&\text{as}~x\to+\infty
\end{cases}
~~~\text{locally uniformly in}~t\in\mathbb{R}.
\end{align}
Furthermore, if $X(t)\to+\infty$ as $t\to+\infty$, then $u(t,\cdot)\to V$ as $t\to+\infty$ locally uniformly in $\mathbb{R}$, and even uniformly in each interval $(-\infty,A]$ with $A\in\R$.

%%%%%%%%%%%%%%%%%%%%%%%%%%%%%%%%%%%%%%%%%%%%%%%%%%%%%%%%%%

\subsection{The main result}

The main result of this paper is the following theorem on the existence of transition fronts connecting $V$ and $0$ for problem~\eqref{model}. From a biological point of view, the transition front constructed below can be interpreted as an alien species invading the left patch from $-\infty$ with asymptotic (past) speed $c_1$ and propagating across the interface and spreading in the right patch with asymptotic (future) speed $c_2$.

\begin{theorem}
\label{thm-1}
Assume that $d_2\neq d_1\sigma^2$ and that $\mu_1,\mu_2$ defined in~\eqref{defmui} satisfy
\begin{equation}
\label{mu1-mu2}
\begin{aligned}
\begin{cases}
\displaystyle\mu_1<\mu_2< \mu_1\Big(2-\frac{d_2}{d_1\sigma^2}\Big)~~\hbox{if}~d_2<d_1\sigma^2,\\
\displaystyle\mu_2<\mu_1< \mu_2\Big(2-\frac{d_1\sigma^2}{d_2}\Big) ~~\hbox{if}~d_2>d_1\sigma^2.
\end{cases}
\end{aligned}
\end{equation}
Define
\begin{equation}
\label{lambda1-2}
\lambda_2:=\sqrt{\frac{\mu_2-\mu_1}{d_1\sigma^2-d_2}}>0\ \hbox{ and }\ \lambda_1:=\sigma \lambda_2>0.
\end{equation}
Then, there exist $c_2\in(c^*_2,+\infty)$ and $c_1\in(c^*_1,+\infty)$ given by 
\begin{equation}
\label{c-12}
c_2:=d_2\lambda_2+\frac{\mu_2}{\lambda_2}\ \hbox{ and }\ c_1:=\frac{\lambda_2c_2}{\lambda_1}=\frac{c_2}{\sigma},
\end{equation}
such that \eqref{model} admits a transition front connecting the unique positive bounded stationary solution~$V$ and~$0$, with asymptotic past speed $c_1$ and asymptotic future speed $c_2$, in the sense of Definition~$\ref{def}$. Furthermore,
\be\label{lim+infty0}
\lim_{t\to-\infty}\Big(\sup_{x\in\R}|u(t,x)-\phi_1(x-c_1t)|\Big)=0,
\ee
and
\be\label{lim+infty}
\lim_{(t,x)\to(+\infty,+\infty)}|u(t,x)-\phi_2(x-c_2t)|=0,
\ee
where $\phi_i$ $($for $i=1,2)$ are given by~\eqref{TW} and~\eqref{TW-decay}, with the speeds $c_i$ defined in~\eqref{c-12}
\end{theorem}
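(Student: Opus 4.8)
The plan is to construct a transition front via the classical super- and subsolution method adapted to the interface conditions, exploiting the exponential structure dictated by \eqref{lambda1-2} and \eqref{c-12}. First I would verify that the proposed decay rates are consistent with the interface conditions at $x=0$. The choice $\lambda_1=\sigma\lambda_2$ and $c_1=c_2/\sigma$ is not arbitrary: the ansatz $e^{-\lambda_1(x-c_1t)}$ for $x<0$ and $e^{-\lambda_2(x-c_2t)}$ for $x>0$ must match both the density condition $u(t,0^-)=u(t,0^+)$ and the flux condition $u_x(t,0^-)=\sigma u_x(t,0^+)$. The density matching forces $\lambda_1c_1=\lambda_2c_2$ (so that the two exponentials carry the same time-dependence $e^{\lambda_i c_i t}$), and the flux matching forces $\lambda_1=\sigma\lambda_2$; together with the dispersion relations $c_i=d_i\lambda_i+\mu_i/\lambda_i$ arising from plugging the pure exponential into each linearized equation, these constraints pin down $\lambda_2$ as in \eqref{lambda1-2}, and condition \eqref{mu1-mu2} is exactly what guarantees $\lambda_2>0$ together with $c_i>c_i^*$ (equivalently $\lambda_i\in(0,\lambda_i^*)$, so each exponent lies strictly below the critical decay rate). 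I would record these algebraic identities first, as they drive everything.

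Next I would build the supersolution by gluing the leading-edge exponentials: roughly, $\overline{u}(t,x)=\min\bigl(V(x),\,\kappa\, e^{\lambda_i c_i t}e^{-\lambda_i x}\bigr)$ on each patch $I_i$, for a suitable constant $\kappa>0$. On each patch the pure exponential is a supersolution of the linearized (hence, by KPP, of the full) equation because $f_i(s)\le\mu_i s$; the minimum with $V$ remains a supersolution in the generalized sense of the extended comparison principle stated after Proposition~\ref{prop-cp}, and the matching conditions on the exponential branch are exactly the interface identities I verified above. For the subsolution I would use the genuine front profiles $\phi_i$ together with their sharp tail asymptotics \eqref{TW-decay}: something of the form $\underline{u}(t,x)=\max\bigl(0,\,\phi_i(x-c_it)-M\,e^{-(\lambda_i+\eta)x}e^{\lambda_ic_it}\bigr)$ on each patch, where the correction term uses the higher-order regularity \eqref{f-w} (with exponent $\omega$) to absorb the nonlinear remainder, and $\eta>0$ is a small faster-decaying exponent chosen so that the subsolution stays below the supersolution and the flux inequality at the interface points the correct way. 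The key structural point is that $\phi_i(x-c_it)$ has leading tail $\sim e^{-\lambda_i(x-c_it)}=e^{-\lambda_i x}e^{\lambda_ic_it}$, which carries the same time factor on both patches precisely because $\lambda_1c_1=\lambda_2c_2$, so the two front leading edges are temporally synchronized across the interface.

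With the ordered pair $\underline{u}\le\overline{u}$ in hand, I would solve the Cauchy problem \eqref{model} for a sequence of initial times $t=-n$ with data squeezed between sub- and supersolution, invoke Proposition~\ref{prop-wellposedness+Schauder} for well-posedness and the uniform $C^{1,\gamma;2,\gamma}_{t;x}$ interior estimates, and extract by a diagonal/compactness argument an entire solution $u$ trapped between $\underline{u}$ and $\overline{u}$ for all time; the monotone dependence on initial data in Proposition~\ref{prop-wellposedness+Schauder} ensures the limit is well-defined and independent of the approximating sequence. The trapping between $\underline{u}$ and $\overline{u}$ yields the transition-front property \eqref{X(t)} with $X(t)$ read off from the level sets of the exponential envelopes, and it gives the asymptotic speeds $c_1$ (past) and $c_2$ (future) together with the convergence statements \eqref{lim+infty0}–\eqref{lim+infty}. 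The main obstacle I anticipate is the interface bookkeeping for the subsolution: one must simultaneously keep $\underline{u}$ below $\overline{u}$, maintain the reversed flux inequality $\underline{u}_x(t,0^-)\le\sigma\underline{u}_x(t,0^+)$ after subtracting the nonlinear-correction terms, and control how the genuine profiles $\phi_i$ (whose exact values at $0$ do not match automatically) interact with the density condition — the clean exponential matching holds only for the pure tails, so reconciling the true front profiles with the interface at finite $x$ is where the delicate shift and constant choices, and the use of \eqref{f-w}, will have to be pushed through carefully.
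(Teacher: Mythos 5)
Your setup — the dispersion relations, the forced identities $\lambda_1c_1=\lambda_2c_2$ and $\lambda_1=\sigma\lambda_2$, the role of \eqref{mu1-mu2}, the supersolution $\min(V,e^{-\lambda_i(x-c_it)})$ glued through the interface, and the limiting argument from initial times $-n$ — is essentially the paper's construction. But your subsolution contains a genuine flaw. The correction term $M\,e^{-(\lambda_i+\eta)x}e^{\lambda_ic_it}$ carries the wrong time factor: in the moving frame $\xi=x-c_it$ it equals $M\,e^{-(\lambda_i+\eta)\xi}e^{-\eta c_it}$, which blows up as $t\to-\infty$, so on any fixed window around the front your $\underline u$ degenerates to $\max(0,\hbox{negative})=0$ precisely in the regime $t\le -T$ where the subsolution must pin $u$ to the front; this destroys the derivation of the past speed $c_1$ and of \eqref{lim+infty0}. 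The paper instead perturbs \emph{pure exponentials in the moving frame}, $e^{-\lambda_i(x-c_it)}-m\,e^{-(\lambda_i+\varepsilon_i)(x-c_it)}$ with the patch-matched exponents $\varepsilon_1=\sigma\varepsilon$, $\varepsilon_2=\varepsilon$, which makes the density and flux conditions hold \emph{exactly} at $x=0$ (using $\lambda_1c_1=\lambda_2c_2$ and $c_2=\sigma c_1$). Moreover, using the genuine profiles $\phi_i$ on \emph{both} patches cannot be pushed through at all: $\phi_1(-c_1t)$ and $\phi_2(-c_2t)$ agree only to leading order $e^{\lambda_ic_it}$, never exactly, so no choice of shifts or constants reconciles the two interface identities for all $t$ — you flagged this as a delicate point, but it is actually why the paper's subsolution \eqref{sub} contains no $\phi_2$ whatsoever: the right patch is a truncated exponential difference, and $\hat\phi_1(x-\hat c_1t+x_0)$ enters only through a max on the left patch, with the exponential branch active near $x=0$. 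Your left branch also loses the limit $K_1$ as $x\to-\infty$, since $e^{-(\lambda_1+\eta)x}\to+\infty$ there and truncation at $0$ kills the far-left lower bound that the max with $\hat\phi_1$ is designed to supply.

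The second, structural gap is your claim that the trapping $\underline u\le u\le\overline u$ directly ``yields'' the transition-front property, both asymptotic speeds, and \eqref{lim+infty0}--\eqref{lim+infty}. The sandwich is only valid for $t\le -T$: both the supersolution's and the subsolution's interface matchings require $t$ very negative (e.g.\ $e^{\lambda_1c_1t}\le\inf_\R V$ and $m\,e^{\sigma\varepsilon c_1t}<1$). The behavior for $t\ge -T$ requires a separate analysis that your proposal omits entirely: new super- and subsolutions $\overline v,\underline v$ valid for $t\ge-T$ giving the sharp decay $u(t,x)\sim e^{-\lambda_2(x-c_2t)}$ ahead of the moving interface (Lemma~\ref{lemma0}); a Dirichlet-problem/stationary-limit argument on $(-\infty,0]$, combined with the large-time convergence result of \cite{HLZ2}, to prove $u\to V$ uniformly on left half-lines (Lemma~\ref{lemma1}); and a compactness/ODE comparison argument to upgrade this to uniformity on $\{x\le c_2t+A\}$ (Lemma~\ref{lemma2}). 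Finally, \eqref{lim+infty0} and \eqref{lim+infty} do not follow from trapping, which gives closeness at the level of exponential envelopes but not identification of the exact profiles: the paper proves them by contradiction and parabolic compactness, invoking a Liouville-type uniqueness theorem for fronts with prescribed exponential decay (\cite[Theorem~3.5]{BH2007}, via the sliding method), for which the two-sided bound \eqref{decay} is exactly the needed input. As it stands, your argument would produce at best an entire solution with the correct behavior for very negative times, and would not establish the future speed $c_2$ or either profile-convergence statement.
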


The limit in~\eqref{lim+infty} means that, for every $\varepsilon>0$, there is $A_\varepsilon>0$ such that
$$|u(t,x)-\phi_2(x-c_2t)|\le\varepsilon\ \hbox{ for all $t\ge A_\varepsilon$ and $x\ge A_\varepsilon$}.$$
In other words, the solution $u$ looks like the front $\phi_2(x-c_2t)$ at large time $t$ and for large~$x$. We point out that this convergence can not be uniform with respect to $x$ in $\R$ as soon as $K_1\neq K_2$, since $u(t,-\infty)=K_1$ for each $t\in\R$ by~\eqref{uK1}, whereas $\phi_2(-\infty)=K_2$. 

Let us now comment on our constructive argument of Theorem \ref{thm-1}, namely, on how the parameters in the statement are properly determined. In fact, by taking into account the feature of our patch model as well as the normalization \eqref{TW-decay} of the traveling wave profiles at their leading edges, we consider the following ansatz for very negative times $t$:
\begin{equation}
\label{ansatz}
\begin{aligned}
u(t,x)=\begin{cases}
e^{-\lambda_1(x-c_1t)}~~\text{for}~x\le 0,\cr
e^{-\lambda_2(x-c_2t)}~~\text{for}~x\ge 0,
\end{cases}
\end{aligned}
\end{equation}
where $(c_1,\lambda_1)$ and $(c_2,\lambda_2)$ are chosen such that $c_1>c_1^*$, $c_2>c_2^*$, and
$$\left\{\baa{l}
\displaystyle0<\lambda_1=\lambda_1(c_1):=\frac{c_1-\sqrt{c_1^2-4d_1\mu_1}}{2d_1}<\lambda_1(c^*_1)=\sqrt{\frac{\mu_1}{d_1}},\vspace{3pt}\\
\displaystyle0<\lambda_2=\lambda_2(c_2):=\frac{c_2-\sqrt{c_2^2-4d_2\mu_2}}{2d_2}<\lambda_2(c^*_2)=\sqrt{\frac{\mu_2}{d_2}}.\eaa\right.$$
Due to the specific continuity and flux interface conditions at $x=0$, the ansatz \eqref{ansatz} leads to the following relations
$$\lambda_1c_1=\lambda_2c_2\ \hbox{ and }\ \lambda_1=\sigma \lambda_2.$$
Accordingly, we should have $d_1\lambda_1^2+\mu_1=\lambda_1c_1=\lambda_2c_2=d_2\lambda_2^2+\mu_2$, which, together with $\lambda_1=\sigma\lambda_2$, further yields
\begin{equation}\label{lambda_2}
0<\lambda_2=\sqrt{\frac{\mu_2-\mu_1}{d_1\sigma^2-d_2}}\ \hbox{ and }\lambda_2<\min\left(\sqrt{\frac{\mu_2}{d_2}},\sqrt{\frac{\mu_1}{d_1\sigma^2}}\right),
\end{equation}
and
\begin{align*}
\begin{cases}
\text{if}\ \displaystyle d_1\sigma^2>d_2,\ \hbox{ then }\ \mu_1<\mu_2< \mu_1\Big(2-\frac{d_2}{d_1\sigma^2}\Big),\vspace{3pt}\cr
\text{if}\ \displaystyle d_1\sigma^2<d_2,\ \hbox{ then }\ \mu_2<\mu_1< \mu_2\Big(2-\frac{d_1\sigma^2}{d_2}\Big).
\end{cases}
\end{align*}
This gives the condition~\eqref{mu1-mu2} on $d_i$, $\mu_i$ and $\sigma$.

Conversely, assuming~\eqref{mu1-mu2} and defining $\lambda_2$ and $\lambda_1$ as in~\eqref{lambda1-2}, we have~\eqref{lambda_2} and, in particular, $0<\lambda_i<\sqrt{\mu_i/d_i}$ for $i=1,2$, hence $c_i:=d_i\lambda_i+\mu_i/\lambda_i>2\sqrt{\mu_id_i}=c^*_i$. We also have $\lambda_1c_1=d_1\lambda_1^2+\mu_1=d_2\lambda_2^2+\mu_2=\lambda_2c_2$ by~\eqref{lambda1-2}, and then $c_1=\lambda_2c_2/\lambda_1=c_2/\sigma$, that is,~\eqref{c-12} holds.

The above heuristic arguments also explain why the condition $d_2\neq d_1\sigma^2$ is imposed. Indeed, if $d_2=d_1\sigma^2$, the above ansatz does not work, unless possibly in the particular case $\mu_1=\mu_2$.

%%%%%%%%%%%%%%%%%%%%%%%%%%%%%%%%%%%%%%%%%%%%%%%%%%%%%%%%%%
%%%%%%%%%%%%%%%%%%%%%%%%%%%%%%%%%%%%%%%%%%%%%%%%%%%%%%%%%%

\section{Proof of Theorem \ref{thm-1}}

The proof is divided into several steps: we first construct suitable super- and subsolutions of~\eqref{model} for very negative times. Then, by solving a sequence of Cauchy problems with initial times~$-n$ and  by passing to the limit as $n\to+\infty$, we obtain an entire solution~$u$ of~\eqref{model}, that is,~$u$ is defined for all $t\in\R$. Finally,  we show that this entire solution is truly a transition front with asymptotic past speed $c_1$ and asymptotic future speed $c_2$, based upon several auxiliary lemmas, where $c_1$ and $c_2$ are given in~\eqref{lambda1-2}-\eqref{c-12}. 
	
%%%%%%%%%%%%%%%%%%%%%%%%%%%%%%%%%%%%%%%%%%%%%%%%%%%%%%%%%%

\subsection{Proper super- and subsolutions}\label{sec31}

Throughout the proof, we assume $d_2\neq d_1\sigma^2$ and~\eqref{mu1-mu2}. Let $(\lambda_2,c_2)$ and $(\lambda_1,c_1)$ be as in~\eqref{lambda1-2}-\eqref{c-12}. From the observations of the end of the previous section, we have
$$c_i>c^*_i=2\sqrt{d_i\mu_i}\ \hbox{(for $i=1,2$)},$$
and
\begin{equation}
\label{12}
c_1\lambda_1=c_2\lambda_2=d_2\lambda_2^2+\mu_2=d_1\lambda_1^2+\mu_1,~~\sigma c_1=c_2,\ \hbox{ and }\ \lambda_1<\sqrt{\frac{\mu_1}{d_1}},
\end{equation}
hence $\lambda_1$ is the smallest root of the equation $d_1\lambda^2-c_1\lambda+\mu_1=0$, that is,
\be\label{lambda1}
\lambda_1=\frac{c_1-\sqrt{c_1^2-4d_1\mu_1}}{2d_1}.
\ee
	
\subsubsection*{Construction of supersolutions}

For any $\gamma_1\ge c_1$, denoting
$$\Lambda_1:=\frac{\gamma_1-\sqrt{\gamma_1^2-4d_1\mu_1}}{2d_1},$$
we claim that the function $\overline u$ defined by
\begin{equation}
\label{super_}
\begin{aligned}
\overline u(t,x)=\begin{cases}
\min\left(V(x),e^{-\Lambda_1(x-\gamma_1 t)},e^{-\lambda_1(x-c_1 t)}\right),~~&x\le 0,\cr
e^{-\lambda_2 (x-c_2 t)},~~&x\ge 0,
\end{cases}
\end{aligned}
\end{equation}
is a generalized supersolution of \eqref{model}, for $t$ negative enough. Before proving the claim, we first note that, when $\gamma_1=c_1$, then $\Lambda_1=\lambda_1$ and $\overline u$ is reduced to the following:
\begin{equation}
\label{super}
\begin{aligned}
\overline u(t,x)=\begin{cases}
\min\left(V(x),e^{-\lambda_1(x-c_1 t)}\right),~~&x\le 0,\cr
e^{-\lambda_2 (x-c_2 t)},~~&x\ge 0.
\end{cases}
\end{aligned}
\end{equation}
\begin{figure}[H]
\centering
\includegraphics[scale=0.6]{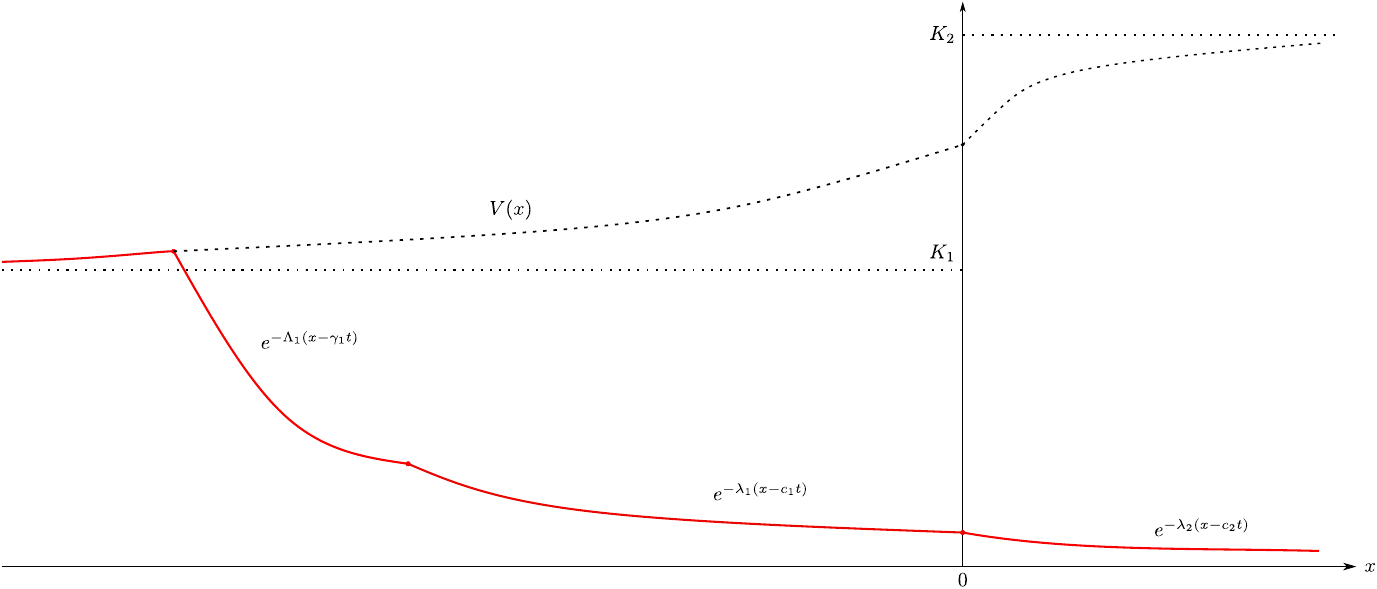}
\caption{Profile of the supersolution $\overline u$ with $\gamma_1\in[c_1,+\infty)$, in the case $K_1<K_2$. }
\end{figure}

To prove our claim, observe first that $0<\Lambda_1\le\lambda_1$, and even $0<\Lambda_1<\lambda_1$ if $\gamma_1>c_1$. By noticing that
$$0<\gamma_1\Lambda_1=d_1\Lambda_1^2+\mu_1\le d_1\lambda_1^2+\mu_1=c_1\lambda_1$$
(and even $\gamma_1\Lambda_1<c_1\lambda_1$ if $\gamma_1>c_1$), we have $e^{\lambda_1c_1t}\le  e^{\Lambda_1\gamma_1t}$ for all $t\le0$, whence we observe from~\eqref{super_} and the positivity of $\inf_{\R}V$ that there is $r>0$ such that $\overline{u}(t,x)=e^{-\lambda_1(x-c_1t)}$ for all $x\in(-r,0)$ and all $t$ negative enough. Moreover, since $ c_1\lambda_1=c_2\lambda_2$ by \eqref{12}, one readily verifies the continuity interface condition at $x=0$ for all $t$ negative enough. The flux condition at $x=0$ also holds since $\lambda_1=\sigma\lambda_2$, hence $-\lambda_1e^{\lambda_1c_1t}=-\sigma\lambda_2e^{\lambda_2c_2t}$ and $\overline{u}_x(t,0^-)=\sigma\overline{u}_x(t,0^+)$ for all $t$ negative enough. Eventually, it is easy to check that the functions $(t,x)\mapsto e^{-\Lambda_1(x-\gamma_1t)}$ and $(t,x)\mapsto e^{-\lambda_1(x-c_1t)}$ (resp. $(t,x)\mapsto e^{-\lambda_2(x-c_2t)}$) satisfy the equations of~\eqref{model} for all $t\in\R$ and $x<0$ (resp. $x>0$) with ``$=$'' replaced by ``$\ge$'', due to the KPP assumption~\eqref{hypkpp} on $f_i$ (for $i=1,2$), while $V$ is a stationary solution of~\eqref{model}. Therefore, we conclude that $\overline u$ is a generalized supersolution of~\eqref{model} for all $t$ negative enough and $x\in\R$. 

We also observe that, since $\inf_{\R}V>0$, one has
\be\label{overuV}
\overline{u}(t,x)\le V(x)
\ee
for all $t$ negative enough and for all $x\in\R$. 

\subsubsection*{Construction of subsolutions}

Let $\omega>0$ be given in \eqref{f-w}. Let us fix $\varep>0$ small enough such that
\begin{equation}
\label{varep}\left\{
\begin{aligned}
\lambda_1<\lambda_1+\sigma\varep<(1+\omega)\lambda_1,~~~~\lambda_2<\lambda_2+\varep<(1+\omega)\lambda_2,~~~~~~\\
\vartheta_1:=\sqrt{c_1^2-4d_1\mu_1}-d_1\sigma\varep>0,~~~\vartheta_2:=\sqrt{c_2^2-4d_2\mu_2}-d_2\varep>0.
\end{aligned}\right.
\end{equation}
Then, choose any $m$ such that
\begin{equation}
\label{value-m}
m>\max\Big(\frac{C}{\sigma\varep\vartheta_1}, \frac{C}{\varep\vartheta_2},1\Big)
\end{equation}
with $C$ given in \eqref{f-w}, and
\be\label{choicem}
\max\Big(\max_{x\in\R}\big(e^{-\lambda_1x}-m\,e^{-(\lambda_1+\sigma\varep)x}\big),\max_{x\in\R}\big(e^{-\lambda_2x}-m\,e^{-(\lambda_2+\varep)x}\big)\Big)<\min(K_1,K_2)=\inf_{\R}V.
\ee
	
We now claim that  there are $T>0$ and $x_0>0$ large enough such that, for any $\hat{c}_1\in(c^*_1,c_1]$, the function $\underline u$ defined by
\begin{equation}
\label{sub}
\begin{aligned}
\underline u(t,x)=\begin{cases}
\max\big(\hat\phi_1(x-\hat c_1 t+x_0),e^{-\lambda_1(x-c_1t)}-m\,e^{-(\lambda_1+\sigma\varep)(x-c_1t)}\big) ,~&x\le 0,\cr
\max\big(e^{-\lambda_2(x-c_2t)}-m\,e^{-(\lambda_2+\varep)(x-c_2t)},0\big),~~&x\ge 0,
\end{cases}
\end{aligned}
\end{equation}
is a generalized subsolution to \eqref{model} in $(-\infty,T]\times\R$, where $\hat\phi_1$ denotes the traveling front profile sol\-ving~\eqref{TW} and~\eqref{TW-decay} with $i=1$ and speed $\hat{c}_1$.
\begin{figure}[H]
\centering
\includegraphics[scale=0.6]{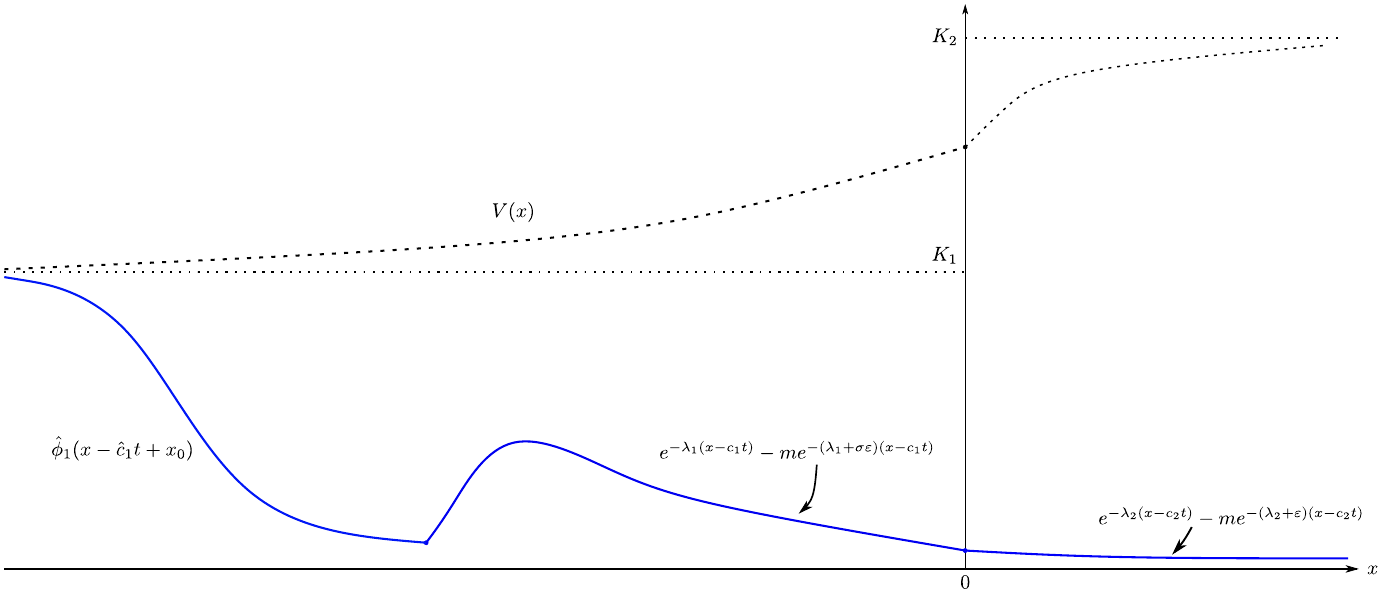}
\caption{Profile of the subsolution $\underline u$ with $\hat c_1\in(c_1^*, c_1]$, in the case $K_1<K_2$. }
\end{figure}

To prove the claim, observe first that, because $2\sqrt{d_1\mu_1}=c^*_1<\hat c_1\le c_1$, there holds
$$\hat\lambda_1:=\frac{\hat c_1-\sqrt{\hat c_1^2-4d_1\mu_1}}{2d_1}\ge\frac{c_1-\sqrt{c_1^2-4d_1\mu_1}}{2d_1}=
\lambda_1,$$
which further implies that $\hat c_1\hat\lambda_1=d_1\hat\lambda_1^2+\mu_1\ge d_1\lambda_1^2+\mu_1 =c_1\lambda_1$. One can then choose $T>0$ large enough such that $m<e^{\sigma\varep c_1 T}$ ($T$ can be chosen independently of $\hat{c}_1\in(c^*_1,c_1]$). Hence, for all $t\le-T$, there holds
$$e^{(\lambda_1c_1-\hat\lambda_1\hat c_1)t}\left(1-m\,e^{\sigma\varep c_1 t}\right)\ge e^{-(\lambda_1c_1-\hat\lambda_1\hat c_1)T}\left( 1-m\,e^{-\sigma\varep c_1 T}\right)\ge1-m\,e^{-\sigma\varep c_1T}=:\varsigma>0.$$
Next, there is $x_0>0$ sufficiently large such that $\varsigma>e^{-\lambda_1 x_0}$ ($x_0$ can be chosen independently of $\hat{c}_1\in(c^*_1,c_1]$). Thus, one has $\varsigma>e^{-\hat\lambda_1 x_0}$ and, for all $t\le -T$,
$$e^{\lambda_1c_1t}-m\,e^{(\lambda_1+\sigma\varep)c_1 t}=e^{\lambda_1c_1t}\left(1-m\,e^{\sigma\varep c_1 t}\right)\ge e^{\hat\lambda_1\hat c_1t}\varsigma> e^{\hat\lambda_1\hat c_1t-\hat\lambda_1 x_0}\ge\hat\phi_1(-\hat c_1 t+x_0)>0,$$
by~\eqref{ineqphii}. This implies that, for all $t\le -T$,
$$0<\underline u(t,0^-)=\max\left(\hat\phi_1(-\hat c_1 t+x_0), e^{\lambda_1c_1t}-m\,e^{(\lambda_1+\sigma\varep)c_1t}\right) =e^{\lambda_1c_1t}-m\,e^{(\lambda_1+\sigma\varep)c_1t},$$
and even that, by continuity, for every $T'<-T$, there is $s_1>0$ such that
$$\underline{u}(t,x)=e^{-\lambda_1(x-c_1t)}-m\,e^{-(\lambda_1+\sigma\varep)(x-c_1t)}\ \hbox{ for every $(t,x)\in[T',-T]\times(-s_1,0)$}.$$
Moreover,~\eqref{12} indicates that 
$$0<\underline u(t,0^-)= e^{\lambda_1c_1t}-m\,e^{(\lambda_1+\sigma\varep)c_1 t}=e^{\lambda_2c_2t}-m\,e^{(\lambda_2+\varep)c_2t}=\underline u(t,0^+)~~\text{for all}~t\le-T,$$
hence, by continuity, for every $T'<-T$, there is $s_2>0$ such that
$$\underline{u}(t,x)=e^{-\lambda_2(x-c_2t)}-m\,e^{-(\lambda_2+\varep)(x-c_2t)}\ \hbox{ for every $(t,x)\in[T',-T]\times(0,s_2)$}.$$
Therefore, $\underline u$ satisfies the continuity interface condition for $t\le -T$, and the flux interface condition at $x=0$ is satisfied as well, that is, $\underline u_x(t,0^-)=\sigma\underline u_x(t,0^+)$ for all $t\le-T$, due to~\eqref{lambda1-2}-\eqref{c-12},~\eqref{12}, and the previous observations.

It is left to check that the functions $(t,x)\mapsto\hat\phi_1(x-\hat c_1t+x_0)$ and
$$(t,x)\mapsto\underline{u}_{1,1}(t,x):=e^{-\lambda_1(x-c_1t)}-m\,e^{-(\lambda_1+\sigma\varep)(x-c_1t)}$$
(resp.
$$(t,x)\mapsto\underline{u}_{1,2}(t,x):=e^{-\lambda_2(x-c_2t)}-m\,e^{-(\lambda_2+\varep)(x-c_2t)})$$ satisfy~\eqref{model} for all $t\le-T$ and $x<0$ when equal to $\underline{u}(t,x)$ (resp. for all $t\le-T$ and $x>0$ such that $\underline{u}_{1,2}(t,x)\ge0$, that is, $\underline{u}(t,x)=\underline{u}_{1,2}(t,x)$), with ``$=$'' replaced by ``$\le$''. Let us first consider $I_1=(-\infty,0)$. The function $(t,x)\mapsto\hat\phi_1(x-\hat c_1t+x_0)$ satisfies the equation $u_t=d_1 u_{xx}+f_1(u)$ in $\R\times I_1$. Let us then consider the set of points $(t,x)\in(-\infty,-T]\times(-\infty,0)$ where $\underline u(t,x)=\underline{u}_{1,1}(t,x)=e^{-\lambda_1(x-c_1t)}-m\,e^{-(\lambda_1+\sigma\varep)(x-c_1t)}\ge\hat\phi_1(x-\hat c_1t+x_0)>0$. For such~$(t,x)$, one has $x-c_1 t>0$ (since otherwise $\underline{u}(t,x)$ would be nonpositive, as $m>1$), and then $\underline{u}_{1,1}(t,x)<e^{-\lambda_1(x-c_1t)}<1$. Then, from~\eqref{f-w},~\eqref{12}-\eqref{lambda1} and~\eqref{varep}-\eqref{value-m}, one derives that
\begin{align*}
(\underline{u}_{1,1})_t(t,x)-d_1(\underline{u}_{1,1})_{xx}(t,x) &=\mu_1\underline{u}_{1,1}(t,x)-m\sigma\varep\vartheta_1 e^{-(\lambda_1+\sigma\varep)(x-c_1t)}\cr
&=\mu_1\underline{u}_{1,1}(t,x)-m\sigma\varep\vartheta_1 [\underbrace{e^{-\lambda_1(x-c_1t)}}_{\ge\underline{u}_{1,1}(t,x)}]^{1+\omega}\underbrace{e^{-[(\lambda_1+\sigma\varep)-(1+\omega)\lambda_1](x-c_1t)}}_{\ge1}\cr
&\le \mu_1\underline{u}_{1,1}(t,x)- m\sigma\varep\vartheta_1(\underline{u}_{1,1}(t,x))^{1+\omega}\cr
&\le f_1(\underline{u}_{1,1}(t,x)).
\end{align*}
Similarly, as $x-c_2t>0$ and $\underline{u}(t,x)<e^{-\lambda_2(x-c_2t)}<1$ for all $t\le-T<0$ and $x>0$, a straightforward computation, for any $(t,x)\in(-\infty,-T]\times(0,+\infty)$ such that $0\le\underline{u}(t,x)=e^{-\lambda_2(x-c_2t)}-m\,e^{-(\lambda_2+\varep)(x-c_2t)}=\underline{u}_{1,2}(t,x)$, yields
$$\baa{rcl}
(\underline{u}_{1,2})_t(t,x)-d_2(\underline{u}_{1,2})_{xx}(t,x) & = & \mu_2\underline{u}_{1,2}(t,x)-m\varep\vartheta_2 e^{-(\lambda_2+\varep)(x-c_2t)}\vspace{3pt}\\
& = & \mu_2\underline{u}_{1,2}(t,x)-m\varep\vartheta_2 [e^{-\lambda_2(x-c_2t)}]^{1+\omega} e^{-[(\lambda_2+\varep)-(1+\omega)\lambda_2](x-c_2t)}\vspace{3pt}\\
& \le & \mu_2\underline{u}_{1,2}(t,x)-m\varep\vartheta_2(\underline{u}_{1,2}(t,x))^{1+\omega}\vspace{3pt}\\
& \le & f_2(\underline{u}_{1,2}(t,x)).\eaa$$
One then concludes that $\underline u$ is a generalized subsolution of \eqref{model} in $[T',-T]\times\R$ for every $T'<-T$.

\subsubsection*{Conclusion}

We consider in particular the case $\gamma_1=\hat c_1=c_1$. Combining the constructions of $\overline u$ (which requires that $\gamma_1\ge c_1$) and $\underline u$ (which requires that $c^*_1<\hat c_1\le c_1$), problem \eqref{model} admits a generalized supersolution $\overline u$ given by~\eqref{super}, as well as a generalized subsolution $\underline u$ given by~\eqref{sub} (with $\hat\phi_1=\phi_1$ here) in $(-\infty,-T]\times\R$ for some large enough $T>0$ and $x_0>0$, so that all above inequalities hold, including~\eqref{overuV} in $(-\infty,-T]\times\R$ (even if it means increasing $T>0$). 
\begin{figure}[H]
\centering
\includegraphics[scale=0.6]{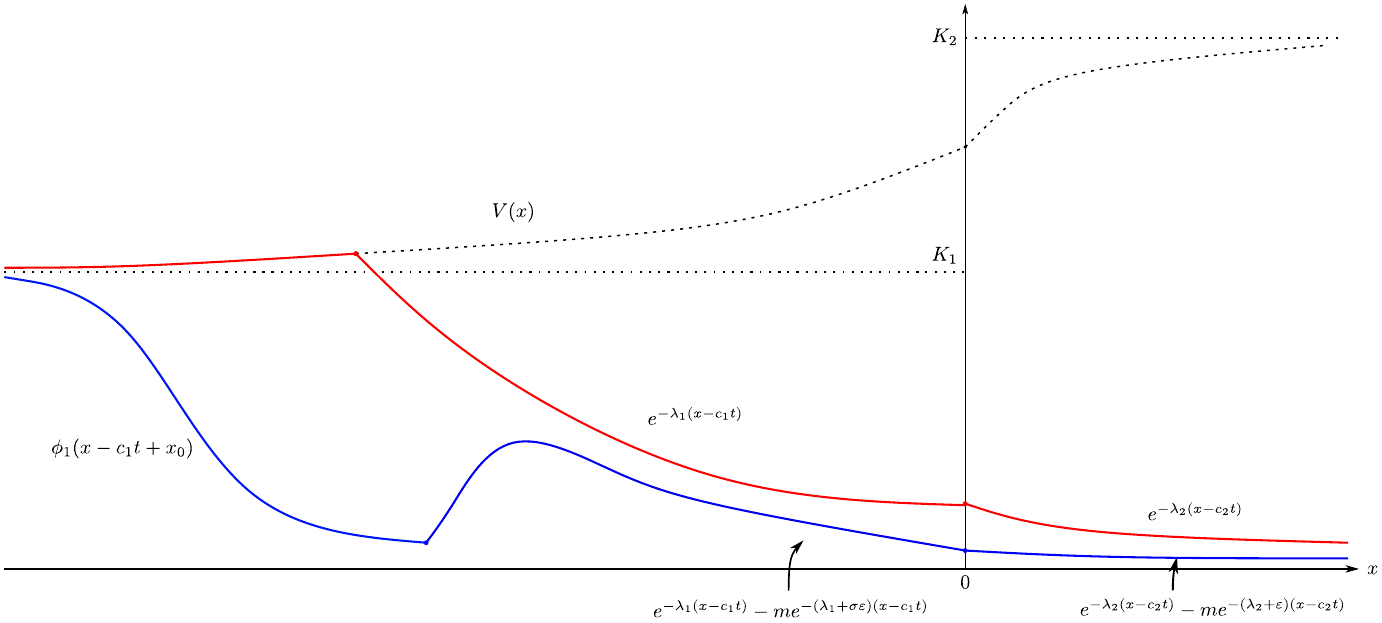}
\caption{Profiles of a coexisting pair of supersolution $\overline u$ (in red) and subsolution $\underline u$ (in blue), in the case $K_1<K_2$.}
\end{figure}
Moreover, it is known from~\cite{AW2} that
$$K_1-\phi_1(x)\sim\alpha\,e^{a x}~\text{as}~x\to-\infty,~~\text{with}~~a=\frac{-c_1+\sqrt{c_1^2-4d_1f'_1(K_1)}}{2d_1}>0,$$
for some $\alpha>0$. However, 
$$K_1-V(x)=O(e^{bx})~~\text{as}~x\to-\infty,~~\text{with}~~b=\sqrt{\frac{-f'_1(K_1)}{d_1}}>0.$$
Since $0<a<b$, we see that $V(x)$ converges faster to $K_1$ as $x\to-\infty$ than $\phi_1(x)$. Remember also that $\phi_1(+\infty)=0$, $\phi_1$ is decreasing, and $\inf_{\R}V>0$. Then, for all $t\le -T$ (up to increasing~$T$ if needed), it follows that $V(x)>\phi_1(x-c_1t+x_0)$ for all $x\le0$, whatever the relation of $K_1$ and $K_2$ is. Together with~\eqref{super},~\eqref{choicem}-\eqref{sub} and
$$\phi_1(x-c_1t+x_0)\le e^{-\lambda_1(x-c_1t+x_0)}\le e^{-\lambda_1(x-c_1t)}$$
(by~\eqref{ineqphii} and the positivity of $x_0$), it follows that
$$0\le\underline u(t,x)\le \overline u(t,x)~~\text{for all}~(t,x)\in(-\infty,-T]\times\mathbb{R}.$$
	 
%%%%%%%%%%%%%%%%%%%%%%%%%%%%%%%%%%%%%%%%%%%%%%%%%%%%%%%%%%

\subsection{Construction of an entire solution $u$}\label{sec32}

A standard limiting argument now gives an entire solution to \eqref{model}. Indeed, for each $n\in\mathbb{N}$ with~$n>T$, let $u_n$ be the solution of the Cauchy problem associated with~\eqref{model} in $[-n,+\infty)\times\R$ with initial (at time $-n$) datum defined by
$$u_n(-n,x)= \underline u(-n,x)~~\hbox{for all} ~x\in\mathbb{R}.$$
The comparison principle stated in Proposition~\ref{prop-cp} and its following extension, applied in $[-n,-T]\times\R$, gives that
$$\max(K_1,K_2)\ge\overline u(t,x)\ge u_n(t,x)\ge \underline u(t,x)\ \hbox{ for all $(t,x)\in[-n,-T]\times\R$}.$$
Furthermore, $\max(K_1,K_2)\ge u_n(t,x)\ge0$ for all $(t,x)\in[-n,+\infty)\times\R$, from Proposition~\ref{prop-cp} applied this time in $[-n,T']\times\R$ for every $T'>-n$. In particular, one has that
$$u_n(-n+1,x)\ge\underline u(-n+1,x)=u_{n-1}(-n+1,x)\ \hbox{ for all $n\in\N$ with $n>T+1$ and $x\in\R$}.$$
It follows from the comparison principle again that $u_n(t,x)\ge u_{n-1}(t,x)$ for every $n>T+1$ and every $(t,x)\in[-n+1,+\infty)\times\R$. Therefore, for each $(t,x)\in\R\times\R$, the sequence $(u_n(t,x))_{n\in\N,\,n>\max(T,-t)+1}$ is nondecreasing and bounded. From the Schauder estimates of Proposition~\ref{prop-wellposedness+Schauder}, the functions $u_n$ converge as $n\to+\infty$, locally uniformly in $(t,x)\in\R\times\R$, to a classical bounded entire solution $u$ of \eqref{model}. Moreover,
\be\label{underoveru}
0\le\underline u (t,x)\le u(t,x) \le \overline u(t,x) ~~\text{in}~(-\infty,-T]\times\mathbb{R}.
\ee

Lastly, since $\underline{u}>0$ in $(-\infty,-T]\times(-\infty,0]$, one has $u>0$ in $(-\infty,-T]\times(-\infty,0]$. The strong parabolic maximum principle applied to the nonnegative function $u$ in $(-\infty,-T]\times[0,+\infty)$ then yields $u>0$ in $(-\infty,-T]\times[0,+\infty)$, hence $u>0$ in $(-\infty,-T]\times\R$. Finally,
$$u(t,x)>0\ \hbox{ for all }(t,x)\in\R\times\R$$
from the strong parabolic maximum principle again and Hopf lemma (at $x=0)$, or from Proposition~\ref{prop-cp}.
  
%%%%%%%%%%%%%%%%%%%%%%%%%%%%%%%%%%%%%%%%%%%%%%%%%%%%%%%%%%

\subsection{The entire solution $u$ is a transition front of~\eqref{model}}

More precisely, we will show that~\eqref{X(t)} holds with
\begin{align}\label{defXt}
X(t)=
\begin{cases}
c_1 t~~~\text{if}~t\le -T,\cr
c_2 t~~~\text{if}~t>-T.
\end{cases}
\end{align}
We point out that bounded perturbations of $X(t)$ would not affect~\eqref{X(t)}. Therefore,~\eqref{X(t)} would also hold if $X$ in~\eqref{defXt} is replaced by $\tilde{X}:\R\to\R$ defined by $\tilde{X}(t)=c_1t$ for $t\le0$ and $\tilde{X}(t)=c_2t$ for $t>0$.

For $t\le -T$, we observe from the construction of super- and subsolutions above and from $V(-\infty)=K_1$, that
$$\begin{cases}
\overline u(t,x+c_1 t)\to K_1~~~\text{as}~x\to-\infty,~\text{uniformly in}~t\le -T,\cr
\underline u(t,x+c_1 t)\to K_1~~~\text{as}~x\to-\infty,~\text{uniformly in}~t\le-T,\end{cases}$$
and
$$\begin{cases}
\overline u(t,x+c_1 t)\to 0~~~\text{as}~x\to+\infty,~\text{uniformly in}~t\le -T,\cr
\underline u(t,x+c_1 t)\to 0~~~\text{as}~x\to+\infty,~\text{uniformly in}~t\le-T.\end{cases}$$
It then follows from~\eqref{underoveru} and $V(-\infty)=K_1$ again that
\be\label{transition1}\begin{cases}
u(t,x)-V(x)\to 0 &\text{as}~x-c_1t\to-\infty,~\text{uniformly in}~t\le -T,\cr
u(t,x)\to 0 &\text{as}~x-c_1t\to+\infty,~\text{uniformly in}~t\le -T.
\end{cases}
\ee

To show that $u$ is a transition front of~\eqref{model} in the sense of Definition~\ref{def} with $X$ given by~\eqref{defXt}, it is left to discuss the case that $t\ge -T$ and show that
\begin{align}
\label{X(t)=c_2t}
\begin{cases}
u(t,x)-V(x)\to 0 &\text{as}~x-c_2t\to-\infty,~~\text{uniformly in}~t\ge -T,\\
u(t,x)\to 0 &\text{as}~x-c_2t\to+\infty,~~\text{uniformly in}~t\ge -T.	
\end{cases}
\end{align}
 
For this purpose, we shall make use of some auxiliary lemmas. We begin with proving the exponential decay of~$u$ far ahead of the moving interface $x=c_2t$.

\begin{lemma}\label{lemma0}
There holds that
\begin{equation}
\label{asymptotics of u}
u(t,x)\sim e^{-\lambda_2(x-c_2t)}~~~\text{as}~x-c_2t\to+\infty,~\text{uniformly in}~t\ge-T.
\end{equation}	
\end{lemma}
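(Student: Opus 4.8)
The plan is to prove the two-sided asymptotic estimate \eqref{asymptotics of u} by sandwiching $u$ between two explicit exponential barriers in the right patch $\{x\ge0\}$, uniformly for $t\ge-T$. For $t\le-T$ there is nothing to do: the sandwich \eqref{underoveru} together with the definitions \eqref{super} and \eqref{sub} gives, for $x\ge0$,
$$e^{-\lambda_2(x-c_2t)}\big(1-m\,e^{-\varep(x-c_2t)}\big)\le u(t,x)\le e^{-\lambda_2(x-c_2t)},$$
so $u(t,x)/e^{-\lambda_2(x-c_2t)}\to1$ as $x-c_2t\to+\infty$, uniformly in $t\le-T$. The content of the lemma is thus the uniformity over $t\ge-T$, where the construction of Section~\ref{sec31} no longer furnishes barriers.

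For the upper bound I would first control $u$ at the fixed interface, showing $u(t,0)\le e^{\lambda_2c_2t}$ for every $t\ge-T$. Set $M:=\max(K_1,K_2)$ and let $t_c$ be defined by $e^{\lambda_2c_2t_c}=M$. On $\{x\le0\}$, resp.\ $\{x\ge0\}$, consider the capped function $\min\!\big(e^{-\lambda_1(x-c_1t)},M\big)$, resp.\ $\min\!\big(e^{-\lambda_2(x-c_2t)},M\big)$; by the KPP inequality $f_i(s)\le\mu_is$ and the relations \eqref{12}, each exponential and the constant $M\ge K_i$ is a supersolution of the corresponding patch equation, while continuity at $x=0$ holds automatically since $\lambda_1c_1=\lambda_2c_2$. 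For $t\le\tau<t_c$ the exponential branch realises the minimum in a fixed neighbourhood $I_i\cap(-r,r)$ with $r=c_1(t_c-\tau)>0$, and there the relation $\lambda_1=\sigma\lambda_2$ gives equality in the flux inequality; hence this capped function is a generalized supersolution of \eqref{model} on $[-T,\tau]\times\R$. Choosing $T$ large enough that $u(-T,\cdot)\le\overline u(-T,\cdot)$ lies below it (which holds since $V\le M$ and $e^{-\lambda_2(x-c_2(-T))}\le M$ on $x\ge0$ for $T$ large), the generalized comparison principle extending Proposition~\ref{prop-cp} yields $u(t,0)\le\min(e^{\lambda_2c_2t},M)=e^{\lambda_2c_2t}$ on $[-T,t_c)$, hence on $[-T,t_c]$ by continuity; for $t>t_c$ the bound $u(t,0)\le M<e^{\lambda_2c_2t}$ is trivial. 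With this boundary bound in hand I would run a pure right-patch comparison: on $\{t\ge-T,\ x\ge0\}$ the function $w(t,x):=e^{-\lambda_2(x-c_2t)}$ satisfies $w_t=d_2w_{xx}+\mu_2w\ge d_2w_{xx}+f_2(w)$, dominates $u$ at $t=-T$ by \eqref{underoveru} and on $x=0$ by the above, so $v:=u-w$ obeys $v_t\le d_2v_{xx}+\mu_2v$ with $v\le0$ on the parabolic boundary; since $v$ is bounded above, the maximum principle on the half-line (after multiplying by $e^{-\mu_2t}$ and exhausting by bounded domains) gives $u\le w$ throughout, i.e.\ the upper half of \eqref{asymptotics of u}.

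The lower bound is symmetric, using the truncated barrier $\underline U(t,x):=\max\!\big(e^{-\lambda_2(x-c_2t)}-m\,e^{-(\lambda_2+\varep)(x-c_2t)},0\big)$, which by the computation following \eqref{sub} (valid whenever $x-c_2t>0$, and trivially where it vanishes) is a generalized subsolution of the right-patch equation for all times and is bounded by \eqref{choicem}. As above, I would first secure $u(t,0)\ge\underline U(t,0)$: letting $t_\dagger:=-\ln m/(\varep c_2)<0$ be the time where $e^{\lambda_2c_2t_\dagger}=m\,e^{(\lambda_2+\varep)c_2t_\dagger}$, one has $\underline U(t,0)=0$ for $t\ge t_\dagger$ (so the bound is then trivial), while the two-patch truncated subsolution gluing $\underline U$ to its $\{x\le0\}$ analogue has its exponential branch realising the maximum near the interface on $[-T,\tau]$ for $\tau<t_\dagger$, with equality in the flux condition; the generalized comparison principle and $u(-T,\cdot)\ge\underline u(-T,\cdot)$ then give $u(t,0)\ge\underline U(t,0)$ on $[-T,t_\dagger)$, hence for all $t\ge-T$. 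A final right-patch comparison of $u$ with $\underline U$ on $\{t\ge-T,\ x\ge0\}$ (both bounded, with $u\ge\underline U$ on the parabolic boundary) yields $u(t,x)\ge e^{-\lambda_2(x-c_2t)}(1-m\,e^{-\varep(x-c_2t)})$ wherever the right-hand side is positive. Combining the two inequalities gives \eqref{asymptotics of u} uniformly in $t\ge-T$.

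The main obstacle, and the reason for this two-stage argument, is the control of $u$ at the fixed interface $x=0$ for the intermediate times $t\in(-T,+\infty)$: the natural exponential barriers have interface values $e^{\lambda_2c_2t}$ that blow up (for the supersolution) or fall below zero (for the subsolution) as $t$ grows, so they must be capped by $M$ or truncated at $0$, and the resulting generalized super/subsolution switches its active branch at the interface at a single time $t_c$, resp.\ $t_\dagger$. The key point is that one never needs the two-patch comparison past that transition time: running it only on $[-T,t_c]$, resp.\ $[-T,t_\dagger]$, where the exponential branch is uniformly dominant and the flux condition holds with equality, already delivers the interface bound, after which the later times are trivial and a clean single-patch half-line comparison (with a Phragm\'en--Lindel\"of-type maximum principle handling $x\to+\infty$) propagates the bound into the whole region $x-c_2t\to+\infty$.
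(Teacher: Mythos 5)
Your proof is correct, but it takes a genuinely different---and more laborious---route than the paper's, and your diagnosis of the ``main obstacle'' is partly off. For the upper bound, the paper simply uses the uncapped glued exponential $\overline v(t,x)=e^{-\lambda_1(x-c_1t)}$ for $x\le0$ and $e^{-\lambda_2(x-c_2t)}$ for $x\ge0$: this is a generalized supersolution of \eqref{model} on all of $[-T,+\infty)\times\R$ (indeed on $\R\times\R$), because the interface conditions hold with \emph{equality} at every time $t$ (by $\lambda_1c_1=\lambda_2c_2$ and $\lambda_1=\sigma\lambda_2$, see \eqref{12}), and because \eqref{hypkpp} gives $f_i(s)\le\mu_i s$ for \emph{all} $s\ge0$---in particular for $s>K_i$, where $f_i<0$---so the exponentials remain supersolutions no matter how large they get. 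The blow-up of the interface value $e^{\lambda_2c_2t}$ as $t$ grows is therefore not an obstruction, and your capping by $M=\max(K_1,K_2)$, the transition time $t_c$, and the subsequent half-line Phragm\'en--Lindel\"of comparison can all be dispensed with: one whole-line comparison from $t=-T$ yields $u\le\overline v$ directly. (Your caution is not entirely without merit: $\overline v$ is unbounded as $x\to-\infty$, so strictly speaking the paper needs a mild extension of Proposition~\ref{prop-cp} beyond the bounded setting of Definition~\ref{def2}, which it treats as routine; your capped barrier stays within the bounded framework at the cost of the two-stage argument.) For the lower bound, the paper sidesteps your interface-control stage on $[-T,t_\dagger)$ altogether by enlarging the coefficient of the subtracted exponential: choosing $M>\max(e^{\varep c_2T},m)$ as in \eqref{M}, the truncated barrier $\underline v$ in \eqref{underv} vanishes identically on a fixed half-line $(-\infty,s]$, $s>0$, for all $t\ge-T$, so the interface conditions are trivially satisfied, the initial comparison at $t=-T$ still holds because $M>m$, and again a single whole-line comparison suffices. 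Your variant keeps the original $m$ and repairs the window $[-T,t_\dagger)$ with a glued two-patch truncated subsolution---correct, since the interface identities you check do hold with equality and the differential inequalities of Section~\ref{sec31} apply on the positivity set---but the constant multiplying the corrector is immaterial for \eqref{asymptotics of u}, so nothing is gained. In short: same sandwich, but the paper gets it with two one-shot comparisons by choosing barriers that are globally valid in time, whereas you get it with interface boundary control plus single-patch half-line comparisons.
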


\begin{proof}
Let $(\lambda_1,c_1)$ and $(\lambda_2,c_2)$ be given in~\eqref{lambda1-2}-\eqref{c-12}. We borrow the idea from the construction of $\overline u$ in~\eqref{super} and define $\overline v$  as follows:
\begin{align*}
\overline v(t,x)=\begin{cases}
e^{-\lambda_1(x-c_1 t)},&  x\le 0,\cr
e^{-\lambda_2 (x-c_2 t)},&x\ge 0.
\end{cases}
\end{align*}
We observe that $\overline u(t,x)\le \overline v(t,x)$ for $(t,x)\in[-T,+\infty)\times\R$, and, as in Section~\ref{sec31}, it is easily checked that $\overline v$ is a generalized supersolution of \eqref{model} for $(t,x)\in[-T,+\infty)\times\R$ (and even in~$\R\times\R$). Moreover, there holds $u(-T,\cdot)\le \overline u(-T,\cdot)\le \overline v(-T,\cdot)$ in $\R$, thanks to \eqref{underoveru}. The comparison principle in Proposition~\ref{prop-cp} implies that
\begin{equation}
\label{u-1}
u(t,x)\le \overline v(t,x)~~\hbox{for}~(t,x)\in[-T,+\infty)\times\R.
\end{equation}

On the other hand, let $\omega>0$, $\varep>0$ and $m>0$ be given in \eqref{f-w}, \eqref{varep} and \eqref{value-m}-\eqref{choicem}, respectively. Choose $M>0$ large enough such that 
\begin{equation}
\label{M}
M>\max(e^{\varep c_2T},m)>1.
\end{equation} 
Let us now introduce the function $\underline{v}$ defined in $[-T,+\infty)\times\R$ by
\begin{align}\label{underv}
\underline v(t,x)=&\displaystyle\begin{cases}
0, &x\le 0,\\
\max\big(e^{-\lambda_2(x-c_2t)}-M e^{-(\lambda_2+\varep)(x-c_2 t)},0\big), &x\ge 0.\end{cases}
\end{align}
We aim to show that $\underline v$ is a generalized subsolution of \eqref{model} in $[-T,+\infty)\times\R$. Indeed, since $M>e^{\varep c_2T}$, one has that
$$e^{\lambda_2c_2t}-Me^{(\lambda_2+\varep)c_2t}=e^{\lambda_2c_2t}(1-Me^{\varep c_2t})< e^{\lambda_2c_2t}(1-e^{\varep c_2(t+T)})\le 0~~\text{for all}~t\ge -T,$$
which implies that $\underline v(t,\cdot)=0$ in the vicinity of the origin for each $t\ge -T$. Furthermore, since the profiles of $\underline{v}(t,\cdot)$ are shifted to the right with speed $c_2>0$ as time $t$ runs, one can find $s>0$ such that $\underline v=0$ in $[-T,+\infty)\times(-\infty,s]$. One then deduces that $\underline v$ automatically satisfies the continuity and flux interface conditions at $x=0$ for every $t\ge -T$. Following a similar computation as for $\underline u$ and utilizing the choice of $\omega$, $\varep$ and $M>m$ in~\eqref{f-w},~\eqref{varep}-\eqref{choicem} and~\eqref{M}, one then gets that the function $(t,x)\mapsto e^{-\lambda_2(x-c_2t)}-M e^{-(\lambda_2+\varep)(x-c_2 t)}$ satisfies the second equation of \eqref{model} in $I_2=(0,+\infty)$ with ``='' replaced by ``$\le$'' for those $(t,x)$ in $[-T,+\infty)\times(0,+\infty)$ such that $e^{-\lambda_2(x-c_2t)}-M e^{-(\lambda_2+\varep)(x-c_2 t)}\ge0$. Moreover, by noticing that $M>m$ and remembering~\eqref{underoveru}, we observe that
$$0\le\underline v(-T,\cdot)\le \underline u(-T,\cdot)\le u(-T,\cdot)~~~\text{in}~\R.$$
Consequently, $\underline v$ is a generalized subsolution of \eqref{model} in $[-T,+\infty)\times\R$. In particular, one infers from comparison principle that
\begin{equation}
\label{u-2}
\underline v(t,x)\le u(t,x)~~\hbox{for all}~(t,x)\in[-T,+\infty)\times\R.
\end{equation}	
Combining \eqref{u-1} and \eqref{u-2}, along with the structures of $\overline v$ and $\underline v$, one reaches the desired conclusion \eqref{asymptotics of u}, which completes the proof of Lemma~\ref{lemma0}.
\end{proof}

Next, we aim to show the large time convergence of $u$ to the positive stationary solution~$V$ far behind the moving interface $x=c_2t$ for $t\ge-T$. To do so, we prove the following lemma as preparation.
	 
\begin{lemma}
\label{lemma1}
For any fixed $\bar x\in\R$, there holds that
$$u(t,x)- V(x)\to 0~~~\text{as}~t\to+\infty,~\hbox{uniformly in}~x\le \bar x.$$
\end{lemma}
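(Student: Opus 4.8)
The plan is to trap $u$, for large times, between the stationary solution $V$ from above and a subsolution that invades $V$ from below behind the moving front, and then to promote the resulting locally uniform convergence to uniform convergence on $(-\infty,\bar x]$ by controlling the far-left tail. A direct time-monotonicity argument is not available here, since the subsolution $\underline u$ in \eqref{sub} is not monotone in $t$ on the right patch, so I would work with a two-sided squeeze instead.

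First I would prove the upper bound $u(t,\cdot)\le V$ for all $t\ge -T$. Indeed, \eqref{overuV} gives $u(-T,\cdot)\le\overline u(-T,\cdot)\le V$, and since $V$ is a stationary solution, hence a supersolution, of \eqref{model}, Proposition~\ref{prop-cp} applied on $[-T,T']$ for every $T'>-T$ yields $u\le V$ on $[-T,+\infty)\times\R$; in particular $V(x)-u(t,x)\ge0$ there. For the matching lower bound, locally in space, I would exploit the instability of $0$ (recall $\mu_1=f_1'(0)>0$). Pick a bounded open interval $J\subset I_1=(-\infty,0)$ of length $2L$ with $L$ so large that the principal Dirichlet eigenvalue $\rho_L$ of $-d_1\partial_{xx}$ on $J$ satisfies $\rho_L<\mu_1$, let $\Psi_J>0$ be the associated eigenfunction, and set $\Psi:=\max(\kappa\Psi_J,0)$ (extended by $0$) with $\kappa>0$ small. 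Since $f_1(s)\ge\rho_L s$ for small $s$, this $\Psi$ is a nonnegative, compactly supported generalized stationary subsolution of \eqref{model} (it does not meet the interface, so the flux condition at $x=0$ is trivial), and $\kappa$ can be chosen so that $\Psi\le\min(u(-T,\cdot),V)$, as $u(-T,\cdot)>0$ is continuous and $\inf_\R V>0$.

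Let $\underline w$ solve \eqref{model} with $\underline w(-T,\cdot)=\Psi$. Because $\Psi$ is a stationary subsolution, $\underline w$ is nondecreasing in $t$; bounded above by $V$, it increases to a nonnegative bounded stationary solution $V_*\le V$, and the strong maximum principle gives $V_*>0$, so $V_*=V$ by the uniqueness of the positive bounded stationary solution. Comparison yields $u\ge\underline w$ on $[-T,+\infty)\times\R$, whence $\liminf_{t\to+\infty}u(t,x)\ge V(x)$; together with $u\le V$ this gives $u(t,x)\to V(x)$ pointwise, and the parabolic Schauder estimates of Proposition~\ref{prop-wellposedness+Schauder} (equicontinuity on compacts) upgrade this to locally uniform convergence. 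This already settles \eqref{X(t)=c_2t} for $x$ in any fixed compact set, and in particular on $[0,\bar x]$ if $\bar x>0$.

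It remains to upgrade to uniformity as $x\to-\infty$. Since $u\le V$ and $V(-\infty)=K_1$, it suffices to produce a lower barrier close to $K_1$ deep in the left patch, uniformly in $t$ bounded below. Here I would exploit $f_1'(K_1)<0$: for $0<\beta<b=\sqrt{-f_1'(K_1)/d_1}$ and small $\eta>0$, the function $K_1-\eta e^{\beta x}$ is a stationary subsolution of the left equation on a half-line $(-\infty,x_\eta]$. Taking in addition $\beta<a$ and using the expansions $K_1-\phi_1(x)\sim\alpha\,e^{ax}$ and $K_1-V(x)=O(e^{bx})$ with $0<a<b$ recalled before Section~\ref{sec32}, together with \eqref{underoveru}--\eqref{transition1} evaluated at the slice $t=-T$, one can arrange $K_1-\eta e^{\beta x}\le u(-T,\cdot)$ far to the left; combining this stationary barrier with the rightward moving front $\phi_1(x-c_1t+x_0)$ (a genuine subsolution of the left equation, increasing in $t$) and the local convergence $u(t,\cdot)\to V$ to control the endpoint of the half-line, comparison should then deliver the desired uniform-in-$t$ lower bound, completing \eqref{X(t)=c_2t}. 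I expect this last step to be the main obstacle: the limit behind the front is $V$, not the constant $K_1$, so near the interface (where $V(0)\neq K_1$ as soon as $K_1\neq K_2$) the naive left front over- or under-shoots the true limit, and making the far-left stationary barrier match the behind-the-front region uniformly over all large $t$ is exactly where the exponential attraction encoded by $f_i'(K_i)<0$ and the precise decay rates $a<b$ must be used with care.
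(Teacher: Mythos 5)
Your upper bound $u\le V$ coincides with the paper's first step, and your mechanism for the locally uniform convergence is correct but genuinely different from the paper's: you evolve a small, compactly supported stationary subsolution under the full two-patch dynamics, use time-monotonicity to get an increasing limit, observe that this limit is a positive bounded stationary solution of \eqref{model}, and conclude by the uniqueness of $V$ recorded from \cite{HLZ2}. The paper instead evolves its bump under the Dirichlet problem \eqref{w-Dirichlet} on $(-\infty,0)$ with $w(t,0)=0$, whose monotone limit $p$ satisfies $p(0)=0<V(0)$, so the squeeze $p\le\liminf_{t\to+\infty}u\le\limsup_{t\to+\infty}u\le V$ does \emph{not} give convergence to $V$ on compacta; for that the paper invokes the large-time result \cite[Theorem~2.6]{HLZ2} applied to the approximating solutions $u_n$ of Section~\ref{sec32}. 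Your variant is more self-contained on this point. The real purpose of the paper's Dirichlet bump is different: it is engineered so that $p(-\infty)=K_1$, which is exactly what feeds the far-left step.

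That far-left step is where your proposal has a genuine gap, which you yourself flag. Two concrete problems. First, the bound $u(t,x)\ge\phi_1(x-c_1t+x_0)$ is only known for $t\le -T$, since $\underline u$ in \eqref{sub} is a subsolution of \eqref{model} only on $(-\infty,-T]$ (the interface matching fails afterwards); to propagate the front bound past $-T$ on a half-line $(-\infty,x_\eta]$ you would need the boundary control $u(t,x_\eta)\ge\phi_1(x_\eta-c_1t+x_0)$ at all intermediate times, which is precisely what is unavailable --- local convergence controls $u(t,x_\eta)$ only for $t\ge T^*$, not on $[-T,T^*]$. Second, the stationary barrier $K_1-\eta e^{\beta x}$ must be placed below $u(T^*,\cdot)$ at a \emph{large} initial time $T^*$; the exponential closeness $u(-T,\cdot)\ge K_1-C'e^{ax}$ degrades when propagated forward (the Gronwall/heat-kernel constants grow with $T^*$), so the abscissa to the left of which the barrier fits under $u(T^*,\cdot)$ depends on $T^*$, while $T^*$ depends in turn on where the half-line endpoint sits: a circular dependence your sketch does not break. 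The paper avoids exponential rates entirely: monotonicity of the Dirichlet flow $w$ gives the uniform floor $u(T^*,x)\ge\eta$ for $x\le x_2$; the limit $p(-\infty)=K_1$ together with $p\le\liminf_{t\to+\infty}u$ gives the boundary control $u(t,x_2)\ge K_1-2\delta$ for $t\ge T^*$; then the spatially constant ODE subsolution $\zeta'=f_1(\zeta)$ with $\zeta(T^*)=\min(\eta,K_1-2\delta)$ lifts $u$ above $K_1-2\delta$ on the whole half-line $\{x\le x_2\}$ in finite time, and this persists by the maximum principle since $f_1(K_1-2\delta)>0$. Replacing your exponential barrier by this ODE-plus-boundary-control argument (extracting the uniform floor from an auxiliary monotone Dirichlet flow, as the paper does) repairs your proof; as written, the last step does not go through, and note that the asymptotics $0<a<b$ you invoke are used in the paper only in Section~\ref{sec31}, to order $\phi_1(\cdot-c_1t+x_0)$ below $V$, not in the proof of Lemma~\ref{lemma1}.
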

 
\begin{proof}
Notice first from~\eqref{overuV} and~\eqref{underoveru} that $0\le u(t,x)\le\overline{u}(t,x)\le V(x)$ in $(-\infty,-T]\!\times\!\R$, which, together with the comparison principle stated in Proposition \ref{prop-cp} yields $u(t,x)\le V(x)$ for all $(t,x)\in[-T,+\infty)\times\R$, hence
\be\label{inequV}
0\le u(t,x)\le V(x)\ \hbox{ for all }(t,x)\in\R\times\R.
\ee
The main idea now, among other things, is to construct a suitable subsolution such that the entire solution $u$ can be forced to converge to $V$ far to the left for large times. 

To do so, we take $R>0$ large enough  such that
$$\frac{\pi}{2R}<\sqrt{\frac{f'_1(0)}{d_1}}.$$
Define $\Psi:\R\to\R$ as
$$\begin{aligned}
\Psi(x)=\begin{cases}
\ 1&\text{in} ~(-\infty,-R),\cr
\ \displaystyle\cos\Big(\frac{\pi}{2R}(x+R)\Big)&\text{in}~[-R,0],\cr
\ 0 &\text{in}~(0,+\infty).
\end{cases}
\end{aligned}$$
The function $\Psi$ is continuous in $\R$, $C^1$ in $\R\setminus\{0\}$, and $C^2$ in $\R\setminus\{-R,0\}$. Due to the choice of $R$, there exists $\eta_0\in(0,K_1)$ small enough such that $\eta d_1\Psi''(x)+f_1(\eta\Psi(x))\ge0$ for all $x\in\R\setminus\{-R,0\}$ and for all $\eta\in(0,\eta_0)$. Fix now~$x_1\in(-\infty,-R)$ sufficiently negative and $\eta\in(0,\eta_0)$ such that
$$\eta\Psi(\cdot-x_1)<\underline u(-T,\cdot)\le u(-T,\cdot)\ \hbox{ in $(-\infty,0]$},$$
which is possible thanks to~\eqref{sub}-\eqref{underoveru}, and $\hat\phi_1(\xi)\to K_1$ as $\xi\to-\infty$ (actually, here, $\hat\phi_1=\phi_1$ since $\hat{c}_1=c_1$). Denote by $w$ the solution of the following initial-boundary value problem:
\begin{equation}
\label{w-Dirichlet}
\begin{aligned}
\begin{cases}
w_t=d_1 w_{xx}+f_1(w)&\text{for} ~t>0,~x<0,\cr
w(t,0)=0	&\text{for} ~t\ge 0,\cr
w(0,x)=\eta\Psi(x-x_1) &\text{for} ~x\le 0.
\end{cases}
\end{aligned}
\end{equation}
The strong parabolic maximum principle entails that $w(t,x)>w(0,x)=\eta\Psi(x-x_1)\ge0$ for all $t>0$ and $x<0$, whence $w(t+h,\cdot)>w(t,\cdot)$ in $(-\infty,0)$ for every $h>0$ and $t\ge 0$. That is,~$w$ is increasing with respect to $t\ge0$ in the space interval $(-\infty,0)$. On the other hand, since
$$w(0,\cdot)=\eta\Psi(\cdot-x_1)<\underline{u}(-T,\cdot)\le\overline{u}(-T,\cdot)\le V\ \hbox{ in $(-\infty,0]$},$$
we readily verify that the positive stationary solution $V$ of \eqref{model} is a  supersolution of \eqref{w-Dirichlet} and the strong maximum principle and the Hopf lemma at $x=0$ give that $w(t,x)<V(x)$ for all $t\ge0$ and $x\in(-\infty,0]$.  From parabolic estimates, it follows that  $w(t,\cdot)$ converges as $t\to+\infty$ in $C^2_{loc}((-\infty,0])$, to a positive bounded stationary solution $p\in C^2((-\infty,0])$  of~\eqref{w-Dirichlet}. The function $p$ satisfies $p(0)=0$ and
$$\eta\Psi(x-x_1)<p(x)\le V(x)\ \hbox{ for all $x\in(-\infty,0)$}.$$
Moreover, we claim that
\begin{equation}
\label{p}
p(-\infty)=K_1.
\end{equation}
To prove this, consider an arbitrary sequence $(x_n)_{n\in\mathbb{N}}$ in $(-\infty,0]$ diverging to  $-\infty$ as $n\to+\infty$ and define $p_n:=p(\cdot+x_n)$ in $(-\infty,0]$ for each $n\in\mathbb{N}$. Then, by standard elliptic estimates, the sequence $(p_n)_{n\in\mathbb{N}}$ converges as $n\to+\infty$, up to extraction of some subsequence, in $C^2_{loc}(\mathbb{R})$ to a bounded function $p_\infty$ which solves
$$d_1p_\infty''+f_1(p_\infty)=0\ \hbox{ in $\mathbb{R}$}.$$
Moreover, $\inf_{\R}p_\infty\ge\eta>0$. It follows that~$p_\infty\equiv K_1$ in $\R$, due to the hypothesis that $f_1>0$ in $(0,K_1)$ and $f_1<0$ in $(K_1,+\infty)$. That is,~$p_n\to K_1$ as~$n\to+\infty$ in $C^2_{loc}(\R)$. Since the  sequence~$(x_n)_{n\in\mathbb{N}}$ was arbitrarily chosen, it follows that  $p(x)\to K_1$ and $p'(x)\to0$ as $x\to-\infty$. Thus,~\eqref{p} is achieved. 

Since $w(0,x)=\eta\Psi(x-x_1)<\underline{u}(-T,x)\le u(-T,x)$ for all $x\le0$ and $w(t,0)=0<u(t-T,0)$ for all $t\ge 0$, we deduce from the comparison principle that
$$w(t,x)<u(t-T,x)\ \hbox{ for all $t\ge 0$ and $x\in(-\infty,0]$}.$$
Together with~\eqref{inequV}, passing to the limit as $t\to+\infty$ gives
\begin{equation}
\label{2.12}
p\le \liminf_{t\to+\infty}u(t,\cdot)\le \limsup_{t\to+\infty}u(t,\cdot)\le V~~~\text{locally uniformly in}~ (-\infty,0].
\end{equation}

Consider now any $\delta\in(0,K_1/2)$. Since $p(-\infty)=V(-\infty)=K_1$, one can choose $x_2\in(-\infty,x_1-R]\subset(-\infty,0)$ negative enough in such a way that
\be\label{x2delta}
K_1-\delta\le p(x)\le  V(x)\le K_1+\delta\ \hbox{ for all $x\le x_2$}.
\ee
Then, thanks to~\eqref{inequV} and~\eqref{2.12}, one derives the existence of a sufficiently large $T^*>0$  such that
\be\label{K1delta}
K_1-2\delta\le p(x_2)-\delta\le u(t,x_2)\le V(x_2)\le K_1+\delta~~\text{for all}~t\ge T^*.
\ee
Moreover, since $x_2\le x_1-R$, we also notice that
$$\inf_{x\le x_2}u(T^*,x)\ge\inf_{x\le x_2}w(T+T^*,x)\ge\inf_{x\le x_2}w(0,x)=\inf_{x\le x_2}\eta\Psi(x-x_1)=\eta>0.$$
Consider now the solution of the ODE $\zeta'(t)=f_1(\zeta(t))$ for $t\ge T^*$ associated with the initial condition $\zeta(T^*)=\min(\eta,K_1-2\delta)\in(0,K_1-2\delta]$. One has $\zeta(t)\nearrow K_1$ as $t\to+\infty$ by~\eqref{hypkpp}, and there is a unique $\overline{T}\in[T^*,+\infty)$ such that $\zeta(\overline{T})=K_1-2\delta$. Using $\zeta$ as a subsolution to~\eqref{model} for~$t\in[T^*,\overline{T}]$ and $x\le x_2$, the comparison principle asserts that $\zeta(t)\le u(t,x)$ for all $t\in[T^*,\overline{T}]$ and $x\le x_2$. In particular, $u(\overline{T},x)\ge K_1-2\delta$ for all $x\le x_2$, and then
$$u(t,x)\ge K_1-2\delta\ \hbox{ for all $t\ge\overline{T}$ and $x\le x_2$}$$
from~\eqref{K1delta} and the maximum principle (since $f_1(K_1-2\delta)>0$).
Together with~\eqref{inequV} and~\eqref{x2delta}, one gets that
\be\label{uV}
\limsup_{t\to+\infty}\Big(\sup_{x\le x_2}|u(t,x)-V(x)|\Big)\le3\delta.
\ee

On the other hand, it follows from~\cite[Theorem 2.6]{HLZ2} on the large-time behavior of~\eqref{model} in the KPP-KPP frame that each function $u_n$, as defined in Section~\ref{sec32}, has the property $u_n(t,x)\to V(x)$ as $t\to+\infty$ locally uniformly in $x\in\R$, whence so does $u$ by~\eqref{inequV} and the inequality $u_n\le u$ in $[-n,+\infty)\times\R$. Consequently,~\eqref{uV} holds with $x_2$ replaced by any $\bar x\in\R$. Lastly, since $\delta>0$ can be arbitrarily small, the proof of Lemma~\ref{lemma1} is thereby complete.
\end{proof}

\begin{lemma}
\label{lemma2}
There holds that
$$\limsup_{t\to+\infty}\Big(\sup_{x\le c_2t+A}|u(t,x)-V(x)|\Big)\to0\ \hbox{ as }A\to-\infty.$$
\end{lemma}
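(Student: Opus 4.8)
The plan is to establish Lemma~\ref{lemma2} by combining the far-left convergence already obtained in Lemma~\ref{lemma1} with the sharp exponential-decay estimate of Lemma~\ref{lemma0}, which together pin down the behaviour of $u$ on the entire region $\{x\le c_2t+A\}$ for $A$ very negative. The key observation is that this half-line region splits naturally into two overlapping pieces: a fixed far-left zone $\{x\le \bar x\}$ for some fixed $\bar x\in\R$, where Lemma~\ref{lemma1} already forces $u(t,\cdot)\to V$ uniformly as $t\to+\infty$; and the intermediate moving zone $\{\bar x\le x\le c_2t+A\}$, where I must show that $u$ stays uniformly close to $V$.

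First I would fix $\delta>0$ arbitrary. Since $V(+\infty)=K_2$ and $V$ is bounded between $K_1$ and $K_2$, and since by Lemma~\ref{lemma0} we have $u(t,x)\sim e^{-\lambda_2(x-c_2t)}$ as $x-c_2t\to+\infty$ uniformly in $t\ge -T$, I can choose $A<0$ with $|A|$ large enough that two things hold simultaneously on the slab $x-c_2t\le A$ intersected with large $x$: the supersolution bound $u(t,x)\le\overline v(t,x)\le e^{-\lambda_2(x-c_2t)}$ (from~\eqref{u-1}) is already small, while $V(x)$ is close to its limit $K_2$. Thus on the far-right part of the region, namely where $x$ is large but still $x\le c_2t+A$, the quantity $|u(t,x)-V(x)|$ is controlled purely by the exponential tail, because $x-c_2t\to+\infty$ forces both $u$ and the relevant comparison functions to be small—wait, this needs care: on $\{x\le c_2t+A\}$ we instead have $x-c_2t\le A\to-\infty$, so Lemma~\ref{lemma0} is \emph{not} directly applicable. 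I therefore reinterpret the region: $x\le c_2t+A$ with $A\to-\infty$ means we stay strictly \emph{behind} the interface, where the solution should be close to $V$, and the exponential control of Lemma~\ref{lemma0} is used only to rule out the boundary layer near $x=c_2t$.

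The cleaner route is as follows. Given $\delta>0$, Lemma~\ref{lemma1} supplies $\bar x\in\R$ and $T_1>0$ with $\sup_{x\le\bar x}|u(t,x)-V(x)|\le\delta$ for all $t\ge T_1$; this handles the fixed part. For the moving part $\{\bar x\le x\le c_2t+A\}$, I would build, exactly as in the proof of Lemma~\ref{lemma1}, a compactly supported subsolution of the form $\eta\Psi(\cdot-y)$ translated to be centred near $x=c_2t+A$; because its leading edge travels with speed $c_2$ and $A$ is a fixed negative constant, the comparison principle propagates the lower bound $u\ge K_1-2\delta$ (and hence $u\ge V-3\delta$, using $\inf_\R V=\min(K_1,K_2)$ and $|V-K_1|$ small far left) across the whole slab $\bar x\le x\le c_2t+A$ for $t$ large, uniformly. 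The matching upper bound $u\le V$ is already free from~\eqref{inequV}. Letting first $t\to+\infty$ and then $A\to-\infty$ and $\delta\to0$ yields the claim.

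The main obstacle I anticipate is the \emph{uniformity in the moving window}: unlike Lemma~\ref{lemma1}, which fixes the spatial region, here the right endpoint $c_2t+A$ recedes to $+\infty$ with $t$, so a single subsolution will not suffice and one must either use the translation-invariance of the underlying equation in $I_1$ together with the spreading speed $c_1<c_2$ of the KPP front $\phi_1$, or invoke an intermediate sweeping argument. Concretely, the delicate point is ensuring that the lower bound $K_1-2\delta$ is achieved \emph{uniformly} up to $x=c_2t+A$ and not merely on a fixed-width strip; I would resolve this by exploiting that the natural front of the left patch propagates at speed $c_1$ which may be slower or faster than $c_2$, so the sign of $c_2-c_1$ and the behaviour of $V$ near $+\infty$ (where $V\to K_2$) must be reconciled. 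This is exactly why the statement takes $A\to-\infty$: choosing $A$ sufficiently negative keeps the window strictly in the region where $V$ is already near its relevant plateau, so that the single spreading subsolution from Lemma~\ref{lemma1}, whose level set $K_1-2\delta$ advances at the KPP spreading speed, overtakes the receding endpoint $c_2t+A$ and delivers the uniform bound.
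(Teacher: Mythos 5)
Your decomposition into a fixed left zone handled by Lemma~\ref{lemma1} plus a moving window $\{\bar x\le x\le c_2t+A\}$ matches the paper's structure, but your treatment of the moving window has two genuine gaps. First, the target equilibrium is wrong: for large $t$ the window lies in the right patch $I_2$, where $V(x)\to K_2$, so a lower bound $u\ge K_1-2\delta$ yields $u\ge V-3\delta$ only when $K_1\ge K_2$; if $K_1<K_2$ it is short by $K_2-K_1$, and no choice of $A$ repairs this. The correct lower bound must approach $K_2$ and must be produced through $f_2$ (the paper uses $f_2(\kappa)>0$ for $\kappa<K_2$, from~\eqref{hypkpp}), not through the left-patch construction $\eta\Psi(\cdot-y)$ built on $f_1$.

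Second, and more fundamentally, your sweeping mechanism cannot reach the receding endpoint. A compactly supported bump subsolution in the right patch spreads at speed at most $c_2^*=2\sqrt{d_2\mu_2}$ (Aronson--Weinberger), whereas the endpoint $c_2t+A$ recedes at speed $c_2>c_2^*$ — this strict inequality is built into the construction~\eqref{c-12}. Taking $A$ more negative buys only a bounded time delay; asymptotically the level set of the bump solution sits near $c_2^*t+o(t)$ and falls ever farther behind $c_2t+A$, so the claimed ``overtaking'' never occurs. The paper resolves exactly this obstacle by anchoring the moving right boundary with the explicit subsolution $\underline v$ of~\eqref{underv} (the one from the proof of Lemma~\ref{lemma0}), which travels at precisely speed $c_2$: by~\eqref{u-2} one gets $u(t,c_2t+A_1)\ge\gamma>0$ for a fixed $A_1>0$ and all large $t$ — note that the \emph{statement} of Lemma~\ref{lemma0} concerns $x-c_2t\to+\infty$ and, as you yourself noticed, is not directly applicable on the window; it is the subsolution from its proof that does the work. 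With this moving-boundary anchor, the fixed-boundary bound $u(t,B)\ge K_2-2\delta$, and the constant subsolution $\kappa$ (using $f_2(\kappa)>0$), the maximum principle on the expanding region $\{B\le x\le c_2t+A_1\}$ gives a uniform bound $u\ge\kappa>0$; a final compactness/Liouville step — translating along sequences $t_n\to+\infty$, $x_n-c_2t_n\to-\infty$ and squeezing the limit between solutions of $\zeta'=f_2(\zeta)$ — upgrades $\kappa$ to $K_2-3\delta$ throughout the window. Both ingredients (the speed-$c_2$ boundary anchor and the limiting upgrade) are missing from your plan, and without them the uniform estimate up to $x=c_2t+A$ cannot be obtained.
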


\begin{proof}
First of all, remember that $M$, in the definition~\eqref{underv} of $\underline v$ in Lemma~\ref{lemma0}, satisfies \eqref{M}, that $\varep>0$ is such that \eqref{varep} holds,  and that $\lambda_2$ is given by \eqref{lambda1-2}. One can then pick $A_1>0$  large enough such that
\be\label{defX0}
0<\max(e^{-\lambda_2 A_1}, Me^{-\varep A_1})<\min(1, K_2).
\ee

Consider now any $\delta\in(0,K_2/3)$. Since $V(+\infty)=K_2$, there exists $B>0$ sufficiently large such that
\be\label{defB}
K_2-\delta\le V(x)\le K_2+\delta~~\hbox{for all }x\ge B.
\ee
Since  $u(t,x)\to V(x)$ as $t\to+\infty$ locally uniformly in $\R$ (again by~\cite[Theorem~2.6]{HLZ2}) and since $u(t,x)\le V(x)$ for all $(t,x)\in\R\times\R$ by~\eqref{inequV}, there is $T_1>0$ large enough such that
\begin{equation}
\label{2.14}
c_2T_1>B,\ \hbox{ and }\ K_2-2\delta\le u(t,B)\le V(B)\le K_2+\delta~~\hbox{for all }t\ge T_1.
\end{equation}
The inequalities~\eqref{inequV} and~\eqref{defB} also entail that
\be\label{BT2}
u(t,x)\le V(x)\le K_2+\delta\ \hbox{ for all $t\ge T_1$ and $x\ge B$}.
\ee
Furthermore, by using the subsolution $\underline v$ given by~\eqref{underv} in Lemma~\ref{lemma0}, one infers from~\eqref{u-2} and~\eqref{defX0} that, for every $t\ge T_1$,
\be\label{u-lower boundary2}\baa{rcl}
u(t,c_2 t+A_1)\ge\underline v(t, c_2t+A_1) & = & \max\big(e^{-\lambda_2A_1}-Me^{-(\lambda_2+\varep)A_1},0\big)\vspace{3pt}\\
& = & e^{-\lambda_2A_1}\big(1-M e^{-\varep A_1}\big)=:\gamma\in(0,\min(1,K_2)).\eaa
\end{equation}
Since $u$ is continuous and positive in $\R\times\R$, there is $\rho>0$ such that
$$u(T_1,x)\ge\rho\ \hbox{ for all $x\in[B,c_2T_1+A_1]$}$$
(remember that $c_2T_1>B$ and $A_1>0$). Denote
$$\kappa:=\min(K_2-3\delta,\gamma,\rho)\in(0,K_2-3\delta].$$
Since $f_2(\kappa)>0$, it follows from the maximum principle together with~\eqref{2.14} and~\eqref{u-lower boundary2} that
\be\label{BX0}
u(t,x)\ge\kappa>0\ \hbox{ for all $t\ge T_1$ and $B\le x\le c_2t+A_1$}.
\ee

We finally claim that there is $A_2<0$ such that
\be\label{claimX1}
\liminf_{t\to+\infty}\Big(\min_{B\le x\le c_2t+A_2}u(t,x)\Big)\ge K_2-3\delta.
\ee
Indeed, otherwise, by~\eqref{BT2}-\eqref{u-lower boundary2}, there would exist $L\in[\kappa,K_2-3\delta]$ and some sequences $(t_n)_{n\in\N}$ in $\R$ and $(x_n)_{n\in\N}$ in $[B,+\infty)$ such that $t_n\to+\infty$, $x_n-c_2t_n\to-\infty$ and $u(t_n,x_n)\to L$ as $n\to+\infty$. Up to extraction of a subsequence, two cases may occur: either $x_n\to+\infty$ as~$n\to+\infty$, or there is $\tilde{B}\in[B,+\infty)$ such that $x_n\to\tilde{B}$ as $n\to+\infty$. In the former case, from standard parabolic estimates together with~\eqref{BT2} and~\eqref{BX0}, the functions
$$U_n:(t,x)\mapsto U_n(t,x):=u(t+t_n,x+x_n)$$
converge, up to extraction of another subsequence, locally uniformly in $\R^2$, to a classical solution~$U_\infty$ of $(U_{\infty})_t=d_2(U_{\infty})_{xx}+f_2(U_\infty)$ in $\R^2$, such that
$$\kappa\le U_\infty\le K_2+\delta\ \hbox{ in $\R^2$}$$
and~$U_\infty(0,0)=L\in[\kappa,K_2-3\delta]$. Let $\underline{\zeta}$ and $\overline{\zeta}$ be the solutions of the ODEs $\underline{\zeta}'(t)=f_2(\underline{\zeta}(t))$ and $\overline{\zeta}'(t)=f_2(\overline{\zeta}(t))$ for $t\ge0$, with initial conditions
$$\underline{\zeta}(0)=\kappa\in(0,K_2-3\delta]\ \hbox{ and }\ \overline{\zeta}(0)=K_2+\delta.$$
It follows from the maximum principle that $\underline{\zeta}(t-t')\le U_\infty(t,x)\le\overline{\zeta}(t-t')$ for all $t'\le t\in\R$ and $x\in\R$. Since $\underline{\zeta}(+\infty)=\overline{\zeta}(+\infty)=K_2$ by~\eqref{hypkpp}, one infers that $U_\infty\equiv K_2$ in $\R^2$, a contradiction with $U_\infty(0,0)=L\le K_2-3\delta<K_2$.

In the case where $x_n\to\tilde{B}\in[B,+\infty)$ as $n\to+\infty$, the functions
$$\tilde{U}_n:(t,x)\mapsto\tilde{U}_n(t,x):=u(t+t_n,x)$$
converge, up to extraction of another subsequence, locally uniformly in $\R\times(0,+\infty)$, to a classical solution $\tilde{U}_\infty$ of $(\tilde{U}_{\infty})_t=d_2(\tilde{U}_{\infty})_{xx}+f_2(\tilde{U}_\infty)$ in $\R\times(0,+\infty)$, such that
$$\kappa\le\tilde{U}_\infty\le K_2+\delta\ \hbox{ in $\R\times[B,+\infty)$}$$
and $\tilde{U}_\infty(0,\tilde{B})=L\in[\kappa,K_2-3\delta]$. Furthermore, by~\eqref{2.14}, one has $\tilde{U}_\infty(t,B)\ge K_2-2\delta$ for all $t\in\R$. For any $t'\in\R$, since $\tilde{U}_\infty(t',\cdot)\ge\kappa$ in $[B,+\infty)$, the maximum principle then implies that $\tilde{U}_\infty(t,\cdot)\ge\underline{\zeta}(t-t')$ for all $t\in[t',t'+\tau]$, where $\tau>0$ is the unique time such that $\underline{\zeta}(\tau)=K_2-2\delta$. Therefore, $\tilde{U}_\infty\ge K_2-2\delta$ in $\R\times[B,+\infty)$, contradicting $\tilde{U}_\infty(0,0)=L\le K_2-3\delta<K_2-2\delta$.

As a conclusion, the claim~\eqref{claimX1} has been proved. Together with Lemma~\ref{lemma1},~\eqref{inequV},~\eqref{defB} and~\eqref{BT2}, one gets that
$$\limsup_{t\to+\infty}\Big(\sup_{x\le c_2t+A_2}|u(t,x)-V(x)|\Big)\le4\delta.$$
Since $\delta>0$ can be arbitrarily small, the proof of Lemma~\ref{lemma2} is thereby complete.
\end{proof}

With the preliminary above lemmas in hand, we can finally show that $u$ is a transition front connecting $V$ and $0$ in the sense of Definition~\ref{def}, with $X$ defined in~\eqref{defXt}. We recall that only~\eqref{X(t)=c_2t} remains to be proved. Let $\delta>0$ be arbitrary. As an immediate consequence of Lemmas~\ref{lemma0} and~\ref{lemma2}, one infers the existence of $X_2>0$ and $T_2>0$ such that
\begin{align}\label{cdn-2}
\begin{cases}
|u(t,x)-V(x)|\le\delta &\text{for all $t\ge T_2$ and $x-c_2t\le-X_2$},\\
0<u(t,x)\le\delta &\text{for all $t\ge-T$ and $x-c_2t\ge X_2$},	
\end{cases}
\end{align}
In view of $u(-T,-\infty)=K_1$ due to~\eqref{underoveru} and the definitions~\eqref{super} and~\eqref{sub} of the functions~$\overline{u}$ and~$\underline{u}$, it follows from parabolic estimates and $f_1(K_1)=0$ that $u(t,-\infty)=K_1$ uniformly for $t\in[-T,T_2]$. We also notice that $V(-\infty)=K_1$, whence
\begin{equation}
\label{cdn-3}
u(t,x)-V(x)\to 0~~\text{as}~x-c_2t\to -\infty,~~\text{uniformly in}~t\in[-T,T_2].
\end{equation}
Gathering \eqref{cdn-2}-\eqref{cdn-3} together with the arbitrariness of $\delta>0$,~\eqref{X(t)=c_2t} follows. Finally, $u$ is a transition front of~\eqref{model} connecting $V$ and $0$ in the sense of Definition~\ref{def}, and~\eqref{X(t)} holds with~$X$ as in~\eqref{defXt}.

%%%%%%%%%%%%%%%%%%%%%%%%%%%%%%%%%%%%%%%%%%%%%%%%%%%%%%%%%%

\subsection{End of the proof of Theorem~\ref{thm-1}}

It remains to show the limit properties~\eqref{lim+infty0}-\eqref{lim+infty}. Assume first, by way of contradiction, that~\eqref{lim+infty0} does not hold. Then there are $\delta>0$, and some sequences $(t_n)_{n\in\N}$ and $(x_n)_{n\in\N}$ in $\R$ such that $t_n\to-\infty$ as $n\to+\infty$, and
\be\label{ineqdelta}
|u(t_n,x_n)-\phi_1(x_n-c_1t_n)|\ge\delta>0\ \hbox{ for all }n\in\N.
\ee
From~\eqref{transition1} together with $\phi_1(-\infty)=V(-\infty)=K_1$ and $\phi_1(+\infty)=0$, it follows that the sequence $(x_n-c_1t_n)_{n\in\N}$ is bounded, hence converges to a real number $\xi$, up to extraction of a subsequence. From standard parabolic estimates, the functions
$$V_n:(t,x)\mapsto V_n(t,x):=u(t+t_n,x+c_1t_n)$$
converge locally uniformly in $\R^2$, up to extraction of another subsequence, to a classical solution~$V_\infty$ of $(V_{\infty})_t=d_1(V_\infty)_{xx}+f_1(V_\infty)$ in $\R^2$. From~\eqref{inequV} together with $V(-\infty)=K_1$, one gets that
\be\label{Vinfty}
0\le V_\infty\le K_1\ \hbox{ in $\R^2$}.
\ee
Furthermore, the definitions~\eqref{super} and~\eqref{sub} of $\overline{u}$ and $\underline{u}$, together with~\eqref{underoveru} again, imply that
\be\label{decay}
\max\big(\phi_1(x-c_1t+x_0),e^{-\lambda_1(x-c_1t)}-m\,e^{(\lambda_1+\sigma\varep)(x_1-c_1t)}\big)\le V_\infty(t,x)\le e^{-\lambda_1(x-c_1t)}
\ee
for all $(t,x)\in\R^2$, with $\varep>0$ as in~\eqref{varep}, whereas the limit $\phi_1(-\infty)=K_1$ and the inequalities~\eqref{Vinfty}-\eqref{decay} imply that $V_\infty(t,x)\to K_1$ as $x-c_1t\to-\infty$, uniformly in $t\in\R$. Together with~\eqref{TW-decay} applied to $\phi_1$ and $c_1>c^*_1$, we can now adapt the Liouville-type result given in \cite{HNRR2013} and we infer that
\be\label{Vphi}
V_\infty(t,x)=\phi_1(x-c_1t)\ \hbox{ for all $(t,x)\in\R^2$}.
\ee
More precisely, that conclusion follows from \cite[Theorem~3.5]{BH2007}, which is based on the above precise exponential decay~\eqref{decay} and on the sliding method. On the other hand, the inequa\-lity~\eqref{ineqdelta} and the convergence $\lim_{n\to+\infty}x_n-c_1t_n=\xi\in\R$ yield $|V_\infty(0,\xi)-\phi_1(\xi)|\ge\delta>0$, contradicting~\eqref{Vphi}. As a consequence,~\eqref{lim+infty0} has been shown.

Let us finally prove~\eqref{lim+infty}. Similarly as in the previous paragraph, we assume, by way of contradiction, that~\eqref{lim+infty} does not hold. Then there are $\tilde\delta>0$, and some sequences $(\tilde t_n)_{n\in\N}$ and $(\tilde x_n)_{n\in\N}$ in $\R$ such that $\lim_{n\to+\infty}\tilde t_n=\lim_{n\to+\infty}\tilde x_n=+\infty$ and
\be\label{ineqdelta2}
|u(\tilde t_n,\tilde x_n)-\phi_2(\tilde x_n-c_2\tilde t_n)|\ge\tilde\delta>0\ \hbox{ for all }n\in\N.
\ee
From~\eqref{X(t)=c_2t} together with $\phi_2(-\infty)=V(+\infty)=K_2$ and $\phi_2(+\infty)=0$, it follows that the sequence $(\tilde x_n-c_2\tilde t_n)_{n\in\N}$ is bounded, hence converges to a real number $\tilde\xi$, up to extraction of a subsequence. From standard parabolic estimates, the functions
$$\tilde V_n:(t,x)\mapsto\tilde{V}_n(t,x):=u(t+\tilde t_n,x+c_2\tilde t_n)$$
converge locally uniformly in $\R^2$, up to extraction of another subsequence, to a classical solution~$\tilde V_\infty$ of $(\tilde V_{\infty})_t=d_2(\tilde V_\infty)_{xx}+f_2(\tilde V_\infty)$ in $\R^2$. From~\eqref{inequV} together with $V(+\infty)=K_2$, one gets that $0\le \tilde V_\infty\le K_2$ in $\R^2$. Furthermore, Lemma~\ref{lemma0} imples that
$$\tilde V_\infty(t,x)\sim e^{-\lambda_2(x-c_2t)}\ \hbox{ as $x-c_2t\to+\infty$, uniformly in $t\in\R$},$$
whereas Lemma~\ref{lemma2} and the limit $V(-\infty)=K_2$ imply that $\tilde V_\infty(t,x)\to K_2$ as $x-c_2t\to-\infty$, uniformly in $t\in\R$. Together with~\eqref{TW-decay} applied to $\phi_2$ and $c_2>c^*_2$, it follows as in the above paragraph that $\tilde V_\infty(t,x)=\phi_2(x-c_2t)$ for all $(t,x)\in\R^2$. This is in contradiction with the inequality $|\tilde V_\infty(0,\tilde\xi)-\phi_2(\tilde\xi)|\ge\tilde\delta>0$ derived from~\eqref{ineqdelta2} and $\lim_{n\to+\infty}\tilde x_n-c_2\tilde t_n=\tilde\xi\in\R$. As a consequence,~\eqref{lim+infty} has been shown, and the proof of Theorem~\ref{thm-1} is thereby complete.\hfill$\Box$

%%%%%%%%%%%%%%%%%%%%%%%%%%%%%%%%%%%%%%%%%%%%%%%%%%%%%%%%%%
%%%%%%%%%%%%%%%%%%%%%%%%%%%%%%%%%%%%%%%%%%%%%%%%%%%%%%%%%%

\end{document}